\def\F#1{{\mathfrak{F}_{#1}}}
\def\esp#1#2{\mathbb{E}_{#1}\left[ #2 \right]}
\def\R{{\mathbb R}}
\def\N{{\mathbb N}}
\def\car{{\mathbf 1}}
\def\d{\text{ d}}
\def\T{{\mathcal T}}
\def\L{{\mathcal L}}
\def\ee{\varepsilon}
\def\M{{\mathfrak M}}
\def\P{{\text{P}\!}}
\newcommand{\Lip}{\operatorname{Lip}}
\newcommand{\Dom}{\operatorname{Dom}}
\newcommand{\id}{\operatorname{Id}}
\newcommand{\esssup}{{\operatorname{esssup}}}
\def\GD{\nabla^\sharp}
\def\GC{\nabla^c}
\def\LD{{\mathcal L}^\sharp}
\def\LC{{\mathcal L}^c}
\def\PD{P^\sharp}
\def\PC{P^c}
\def\QC{Q^c}
\def\QD{Q^\sharp}
\def\LC{{\mathcal L}^c}
\def\LD{{\mathcal L}^\sharp}
\def\alphaC{\alpha^c}
\def\alphaD{\alpha^\sharp}
\def\DD{\delta^\sharp}
\def\DC{\delta^c}
\def\B{{\mathcal B}}
\def\hX{\widehat{\Lambda}}
\def\hmu{\widehat{\mu}}
\def\hd2{\widehat{\rho _1}}
\def\hrho{\widehat{\rho _2}}
\def\homega{\widehat{\omega}}
\def\hF{\widehat{F}}
\def\hL{\widehat{L}}
\def\hnu{\widehat{\nu}}
\def\heta{\widehat{\eta}}
     \def\section{\@startsection{section}{1}%
     \z@{.7\linespacing\@plus\linespacing}{.5\linespacing}%
     {\bfseries
     \centering
     }}
     \def\@secnumfont{\bfseries}
\newtheorem{theorem}{Theorem}[section]
\newtheorem{lemma}[theorem]{Lemma}
\newtheorem{proposition}[theorem]{Proposition}
\newtheorem{hyp}[theorem]{Hypothesis}
\theoremstyle{definition}
\newtheorem{definition}[theorem]{Definition}
\theoremstyle{remark}
\numberwithin{equation}{section}
\begin{document}

\title[Rubinstein distances on configuration spaces]{Upper bounds on Rubinstein distances on configuration spaces and applications}

\author[Laurent Decreusefond]{Laurent Decreusefond}
\address{Laurent Decreusefond: Institut TELECOM, TELECOM ParisTech, CNRS LTCI, Paris, France}
\email{Laurent.Decreusefond@telecom-paristech.fr}

\author[Ald\'eric Joulin]{Ald\'eric Joulin}
\address{Ald\'eric Joulin: Universit\'e de Toulouse, Institut National des Sciences Appliqu\'ees, Institut de Math\'ematiques de Toulouse, F-31077 Toulouse, France}
\email{alderic.joulin@math.univ-toulouse.fr}

\author[Nicolas Savy]{Nicolas Savy}
\address{Nicolas Savy: Universit\'e de Toulouse, Universit\'e Paul Sabatier, Institut de Math\'ematiques de Toulouse, F-31062 Toulouse, France}
\email{nicolas.savy@math.univ-toulouse.fr}

\subjclass[2000] {60G55,60H07,60E15}

\keywords{Configuration space, Poisson measure, Rubinstein distance, Malliavin derivative, Rademacher property, tail estimate, isoperimetry}

\begin{abstract}
In this paper, we provide upper bounds on several Rubinstein-type
  distances on the configuration space equipped with the Poisson
  measure. Our inequalities involve the two well-known gradients, in
  the sense of Malliavin calculus, which can be defined on this space. Actually, we show that depending on the distance between
  configurations which is considered, it is one gradient or the other
  which is the most effective. Some applications to distance estimates
  between Poisson and other more sophisticated processes are also provided,
  and an application of our results to tail and isoperimetric estimates completes this work.
\end{abstract}

\maketitle

\section{Introduction}
\label{sec:introduction}
\paragraph{}
Let $\Lambda$ be a $\sigma$-compact metric space and $\Gamma_\Lambda$ be the space of
configurations on $\Lambda$ equipped with a Poisson measure
$\mu$. Defining and evaluating some distances between probability
measures on $\Gamma_\Lambda$ is an important problem, both theoretical
and for applications, since it is equivalent to defining distances
between point processes (see for instance Chapters~2 and 3 of
\cite{Schuhmacher:2005lx} for a thorough discussion and references
about this topic). Among the large class of distances one may consider, the one we want to study relies on an optimal transportation problem. Letting $\rho$ be a lower semi-continuous distance on $\Gamma_\Lambda$ and two configurations $\omega, \eta \in \Gamma_\Lambda$, we understand the quantity $\rho (\omega , \eta)$ as the cost for transporting one unit of mass from $\omega$ to $\eta$. Hence the optimal transportation cost between $\mu$ and some probability measure $\nu$ on $\Gamma_\Lambda$ is given by
\begin{equation*}
\T_\rho (\mu, \nu) = \inf_{\gamma \in \Sigma(\mu, \nu)} \int_{\Gamma_\Lambda} \int_{\Gamma_\Lambda} \rho(\omega ,\eta ) \d \gamma(\omega ,\eta),
\end{equation*}
where $\Sigma(\mu, \nu)$ is the set of probability measures on $\Gamma_\Lambda \times \Gamma_\Lambda$ with marginals $\mu$ and $\nu$.~Such a quantity is called the Rubinstein distance between $\mu$ et $\nu$.~Being defined by a variational formula, its explicit expression is of difficult access in general but might be estimated from above:~the construction of any coupling between $\mu$ and $\nu$ yields a bound on the Rubinstein distance between $\mu$ and $\nu$.~In particular, a convenient upper bound ensures its finiteness, which is not guaranteed a priori.
\paragraph{}Another interesting property of $\T_\rho$ is its rich duality.~More precisely,~the Kanto\-ro\-vich-Rubinstein duality allows us to rewrite the Rubinstein distance as
\begin{equation*}\T_\rho (\mu , \nu) = \sup_{F\in \rho -\Lip_1}\int _{\Gamma_\Lambda} F\d (\mu -\nu ),\end{equation*}
where $\rho -\Lip_1$ denotes the set of $1$-Lipschitz functions on $\Gamma_\Lambda$ with respect to the distance $\rho$. This means that $\T_\rho$ depends crucially on the distance on the configuration space as it changes the set of Lipschitz functions, hence incorporates a lot of information on the geometry of $\Gamma _\Lambda$. Using the dual definition of the Rubinstein distance instead of the original one can be very relevant in some cases.
\paragraph{}
Given a probability measure $\nu$ with density $L$ with respect to the Poisson refe\-rence measure $\mu$, our purpose in the present paper is to control from above the Rubinstein distance $\T_\rho (\mu, \nu)$ in terms of convenient (and easily computable) quantities involving the density $L$. Such inequalities belong to the domain of functional inequalities, which is by now a wide field of research with numerous methods of proofs. See for instance the very complete monograph \cite{Villani:2007fk} and particularly Chapters~21 and 22 for a large panorama on this topic, with precise references and credit.
\paragraph{} 
To derive our inequalities, the two main ingredients at work are other representations of the Rubinstein distance and the Rademacher property. On the one hand, such representations can be obtained either by embedding the two probabi- lity measures into the evolution of a Markov semi-group, or by using the so-called Clark formula. On the other hand, the Rademacher property formally states that given a distance $\rho$, there exists a notion of gradient such that its domain contains the set $\rho -\Lip_1$ and any function in $\rho -\Lip_1$ has a gradient whose norm is less than $1$, i.e., that we can proceed as in finite dimension.
\paragraph{}
For these two steps, we need a notion of gradient. In the setting of configuration spaces, such a notion does exist within the Malliavin
calculus. In fact, we even have two notions of gradient: a ``differential'' gradient (see \cite{MR99d:58179,
  MR1730565}) and a gradient expressed as a finite difference operator (see \cite{nualart88_1}). We show that depending on the
distance $\rho$ chosen on the configuration space, one gradient or the other is more convenient, i.e., the Rademacher property holds with one
notion of gradient, or the other. 
\paragraph{}
The paper is organized as follows. After the preliminaries of Section~\ref{sec:preliminaries}, we provide in
Section~\ref{sec:bounds} various upper bounds on the Rubinstein distance $\T_\rho (\mu, \nu)$, where $\rho$ is the total variation distance, the Wasserstein distance or the trivial distance on the configuration space $\Gamma _\Lambda$. Based on a semi-group approach, the first abstract upper bound involves the gradient associated to our given distance $\rho$ in the sense of the Rademacher property. When dealing with the total variation distance on the one hand, such an estimate has a simplified expression, contained in our first main result, Theorem~\ref{T:E1}, which can be retrieved by using an alternative method, namely the Clark formula. On the other hand, when the configuration space is equipped with the Wasserstein distance, the upper bound we give in our second main result, Theorem~\ref{thm:time_change}, relies on a time-change argument together with the Girsanov Theorem. Finally, the last Section~\ref{sec:applications} is devoted to numerous applications of these two inequalities:~by choosing the probability measure $\nu$ as the distribution of a given process, we are able to estimate from above distances between Poisson processes, between Poisson and Cox processes, between Poisson and Gibbs processes, etc.~We thus hope to give a systematic treatment of the various situations one may encounter in applications.~We conclude this work by providing another consequence of Theorem~\ref{T:E1} to tail and isoperimetric estimates.~In particular, we obtain sharp deviation inequalities for the total variation distance and also a new estimate of the classical isoperimetric constant, which is asymptotically sharp as the total mass of $\Lambda$ is small.
\section{Preliminaries}
\label{sec:preliminaries}
\paragraph{}
Let $X$ be a Polish space and $\rho$ a lower semi-continuous distance
on $X\times X$, which does not necessarily generate the topology on
$X$. Given two probability measures $\mu$ and $\nu$ on $X$, the
optimal transportation problem associated to $\rho$ consists in
evaluating the distance
\begin{equation}\label{eq:19}
  \T_\rho (\mu,\nu)= \inf_{\gamma \in \Sigma(\mu, \nu)} \int_X \int_X   \rho(x,y) \d \gamma(x,y),
\end{equation}
where $\Sigma(\mu, \nu)$ is the set of probability measures on
$X\times X$ with first (respectively second) marginal $\mu$
(respectively $\nu$). By Theorem~4.1 in \cite{Villani:2007fk}, there
exists at least one probability measure $\gamma$ for which the infimum
is attained. According to the celebrated Kantorovitch-Rubinstein
duality theorem, cf. Theorem~5.10 in \cite{Villani:2007fk}, this minimum is equal to
\begin{equation}
\label{eq:9} \T_\rho (\mu, \nu) = \sup_{\substack{F \in \rho-\Lip_1 \\ F\in L^1(\mu+\nu)}}\int _X F \d (\mu-\nu) ,
\end{equation}
where $\rho-\Lip_m$ is the set of bounded Lipschitz continuous functions $F$ from $X$ to $\R$ with Lipschitz constant $m$:
\begin{equation*}
  |F(x)-F(y)| \leq m \rho(x,y), \quad x,y \in X .
\end{equation*}
In the context of optimal transportation, $\T_\rho$ is considered as a Rubinstein distance since the cost function is already a distance (see
for instance the bibliographical notes at the end of Chapter~6 in \cite{Villani:2007fk}).
\paragraph{}In this paper, we consider the situation where $X=\Gamma_{\Lambda}$ is the configuration space on a $\sigma$-compact metric space $\Lambda$ with Borel $\sigma$-algebra ${\mathcal B}(\Lambda )$, i.e.,
\begin{equation*}
  \Gamma_\Lambda = \{ \omega \subset \Lambda : \omega \cap K \text{ is a finite set for every compact } K\in {\mathcal B}(\Lambda ) \}.
\end{equation*}
Here the $\sigma$-compactness means that $\Lambda$ can be partitioned into the union of coun\-tably many compact subspaces.
We identify $\omega \in \Gamma_\Lambda$ and the positive Radon measure $\sum_{x\in \omega } \varepsilon_x,$ where $\ee_a$ is the Dirac
measure at point $a.$ Throughout this paper, $\Gamma_\Lambda$ is endowed with the vague topology, i.e., the weakest topology such that
for all $f\in {\mathcal C}_0 (\Lambda )$ (continuous with compact support on $\Lambda$), the following maps
\begin{equation*}
  \omega \mapsto \int_{\Lambda} f\d \omega = \sum_{x\in\omega } f(x)
\end{equation*}
are continuous. When $f$ is the indicator function of a subset $B,$ we will use the shorter notation $\omega (B)$ for the integral of
$\car_B$ with respect to $\omega $.~We denote by $\mathcal{B}(\Gamma_{\Lambda})$ the corresponding Borel $\sigma$-algebra.~Let $\M(\Lambda)$ be the space of positive and diffuse Radon measures on $\mathcal{B}(\Lambda)$ endowed with the corresponding Borel $\sigma$-field and equipped with the topology of vague con\-ver\-gen\-ce.~Given a measure $\sigma \in \M(\Lambda)$, the probability space under consideration in the remainder of this paper will be the Poisson space $(\Gamma_\Lambda,\mathcal{B}(\Gamma_{\Lambda}),\mu_\sigma)$, where $\mu_\sigma$ is the Poisson measure of intensity $\sigma$, i.e., the probability measure on $\Gamma_\Lambda$ fully characterized by
\begin{equation*}
\esp{\mu_\sigma}{\exp \left( \int_{\Lambda} f \d \omega \right) } = \exp \left \{ \int_{\Lambda} ( e^f -1) \d \sigma \right \} ,
\end{equation*}
for all $f\in {\mathcal C}_0 (\Lambda )$. Here $\mathbb{E}_{\mu_\sigma }$ stands for the expectation under the measure $\mu _\sigma $.
\subsection{Distances on the configuration space $\Gamma _\Lambda $}
\label{sec:distances}
Actually, several distance concepts are available between elements of the configuration space $\Gamma_\Lambda$, cf.~for instance \cite{Schuhmacher:2005lx} for a thorough discussion about this topic.~We introduce only three of them which will be useful in the sequel.~Let $\omega$ and $\eta$ be two configurations in $\Gamma_\Lambda$.
\begin{description}
\item[Trivial distance] The trivial distance is simply given by
  \begin{eqnarray*}
    \rho _0 (\omega ,\eta ) & = & \car_{\{\omega \neq \eta \}}.
  \end{eqnarray*}
\item[Total variation distance] The total variation distance is defined as
  \begin{eqnarray*}
    \rho _1(\omega , \eta ) &= & \sum _{x \in \Lambda } \vert \omega (\{x \}) - \eta (\{x \}) \vert \\
    & = & \omega\Delta\eta(\Lambda)+\eta\Delta\omega(\Lambda),
  \end{eqnarray*}
  where $\omega\Delta\eta=\omega\backslash (\omega\cap \eta).$ 
\item[Wasserstein distance] If $\Lambda = \R^k$ and $\kappa $ is the
  Euclidean distance, the Wasserstein distance is given by
  \begin{equation*}\label{eq:7}
    \rho _2 (\omega , \eta )= \inf _{\beta\in \Sigma({\omega , \eta })} \, \sqrt{\int _\Lambda \int _\Lambda \kappa (x,y) ^2 \d \beta (x,y) },
  \end{equation*}
  where $\Sigma({\omega , \eta })$ denotes the set of configurations $\beta \in \Gamma_{\Lambda\times \Lambda}$ having marginals $\omega
  $ and $\eta $, see \cite{Decreusefond:2006cv,MR1730565}.
\end{description}
\paragraph{}Let us comment on these notions of distance on the configuration space $\Gamma_\Lambda$. First, the total variation distance $\rho _1$ is nothing but the number of different atoms between two configurations. In particular, we allow them to be infinite so that the total variation distance might take infinite values. Note that our definition is a straightforward generalization of the classical notion of total variation distance between probability measures, since it coincides with the usual definition when the configurations are normalized by their total masses.
\paragraph{}As the total variation distance $\rho _1$, the Wasserstein distance $\rho _2 $ also shares the property that it might takes infinite
values. Indeed, if the total masses of two configurations $\omega $ and $\eta $ are  finite but differ, then there exists no coupling configuration $\beta$ in $\Sigma({\omega ,\eta })$, hence the distance should be infinite. If $\omega (\Lambda)=\eta (\Lambda)< + \infty$ with $\omega = \sum _{j=1} ^{\omega (\Lambda)} \delta _{x_j}$ and $\eta = \sum _{j=1} ^{\eta (\Lambda)} \delta _{y_j}$, we can also write
\begin{equation*}
  \rho _2 (\omega ,\eta ) ^2 = \inf_{\tau \in {\mathfrak S}_{\omega (\Lambda)}}\sum_{j=1}^{\omega (\Lambda)} \kappa (x_j, y_{\tau (j)})^2 ,
\end{equation*}
where ${\mathfrak S}_{\omega (\Lambda)}$ denotes the symmetric group
on the finite set $\{ 1,2, \ldots , \omega (\Lambda) \}$.  As such
$\rho _2$ appears as the dimension-free generalization of the Euclidean distance.
\paragraph{}In order to use the Kantorovich-Rubinstein duality Theorem, the
lower semi-continuity of the distances $\rho _i$, $i\in \{ 0,1,2\}$, is required. This is
the object of the next lemma.
\begin{lemma}
  \label{lem:lsc}
  For any $i\in \{ 0,1,2\}$, the distance $\rho_i$ is lower semi-continuous on the product
  space $\Gamma_\Lambda\times \Gamma_\Lambda$ equipped with the product topology.
\end{lemma}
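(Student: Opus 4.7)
My plan is to treat the three distances separately. The case of the trivial distance $\rho_0$ is immediate: since $\Gamma_\Lambda$ endowed with the vague topology is Polish, hence Hausdorff, the diagonal $\Delta = \{(\omega, \omega) : \omega \in \Gamma_\Lambda\}$ is closed in the product space, so $\rho_0 = \car_{\Delta^c}$ is the indicator of an open set and is therefore lower semi-continuous.

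For the total variation distance $\rho_1$, the strategy is to express it as a supremum of vaguely continuous functionals. I claim that
\begin{equation*}
\rho_1(\omega, \eta) = \sup \left\{ \int_\Lambda f \d \omega - \int_\Lambda f \d \eta : f \in \C_c(\Lambda),\ |f| \leq 1 \right\}.
\end{equation*}
The bound $\leq$ is immediate from $|f| \leq 1$. For the reverse inequality, fix a finite subset $A$ of $\{x : \omega(\{x\}) \neq \eta(\{x\})\}$. Since $\omega$ and $\eta$ are locally finite and $A$ is finite, I can pick a relatively compact open neighborhood $U$ of $A$ whose intersection with $\mathrm{supp}(\omega) \cup \mathrm{supp}(\eta)$ is exactly $A$, and use Urysohn's lemma to build $f \in \C_c(\Lambda)$ supported in $\overline{U}$, with $|f| \leq 1$ and $f(x) = \mathrm{sign}(\omega(\{x\}) - \eta(\{x\}))$ for $x \in A$. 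The pairing then equals $\sum_{x \in A} |\omega(\{x\}) - \eta(\{x\})|$, and letting $A$ exhaust the set of discrepancies recovers $\rho_1(\omega, \eta)$. Since each test functional $(\omega, \eta) \mapsto \int f \d \omega - \int f \d \eta$ is continuous in the product vague topology by the very definition of that topology, $\rho_1$ is a supremum of continuous maps, hence lower semi-continuous.

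For the Wasserstein distance $\rho_2$, I follow the classical compactness-of-optimal-couplings approach. Suppose $\omega_n \to \omega$ and $\eta_n \to \eta$ vaguely, and that $L := \liminf_n \rho_2(\omega_n, \eta_n)$ is finite (otherwise there is nothing to show). Passing to a subsequence realising the liminf, select optimal couplings $\beta_n \in \Sigma(\omega_n, \eta_n)$. Since $\beta_n(K_1 \times K_2) \leq \omega_n(K_1) \wedge \eta_n(K_2)$ and vague convergence forces $\omega_n(K_1)$ and $\eta_n(K_2)$ to stay bounded for any compacts $K_1, K_2$, the family $\{\beta_n\}$ is vaguely relatively compact in $\Gamma_{\Lambda \times \Lambda}$. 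Extract a further subsequence with $\beta_n \to \beta$ vaguely. A cutoff argument, testing against products $g(x)\chi_R(y)$ with $g \in \C_c(\Lambda)$ and $\chi_R \in \C_c(\Lambda)$ equal to $1$ on the ball of radius $R$, then sending $R \to \infty$, identifies the marginals of $\beta$ as $\omega$ and $\eta$. Finally, Fatou's lemma applied to the non-negative continuous function $\kappa^2$ yields $\int \kappa^2 \d \beta \leq \liminf_n \int \kappa^2 \d \beta_n = L^2$, whence $\rho_2(\omega, \eta) \leq L$.

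The main obstacle lies in the third case: identifying the marginals of $\beta$ is delicate when the total masses $\omega_n(\Lambda)$ and $\eta_n(\Lambda)$ are infinite or unbounded, and some care is needed in exploiting the $\sigma$-compact exhaustion of $\Lambda$ together with the local mass bounds inherited by $\beta_n$ from its marginals. By contrast, the first two cases are essentially structural once the correct representation has been identified.
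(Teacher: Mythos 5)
Your treatment of $\rho_0$ matches the paper's (which simply calls it immediate), and your argument for $\rho_1$ is correct but takes a genuinely different route: the paper shows directly that each sublevel set $J_\alpha=\{\rho_1\le\alpha\}$ is vaguely closed by restricting configurations to compact sets, applying the triangle inequality and passing to an exhaustion $(K_p)$, whereas you represent $\rho_1$ as a supremum of the vaguely continuous functionals $(\omega,\eta)\mapsto\int_\Lambda f\,\mathrm{d}(\omega-\eta)$ over $f\in\mathcal{C}_0(\Lambda)$ with $|f|\le 1$. Your dual representation is clean and arguably more robust than the paper's restriction argument (which tacitly needs $\rho_1(\pi_K\omega,\pi_K\omega_n)\to 0$ under vague convergence, a point the paper glosses over). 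The only care needed on your side is that the Urysohn function must vanish at every point of $(\omega\cup\eta)\setminus A$, so it should be supported in $U$ rather than $\overline U$, and the construction of a relatively compact neighborhood uses local compactness of $\Lambda$, which is implicit throughout the paper.

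For $\rho_2$ the paper simply invokes Lemma~4.1 of R\"ockner--Schied, while you sketch the direct compactness argument --- and here there is a genuine gap, one you yourself flag as ``the main obstacle'' without closing it. The local mass bounds $\beta_n(K_1\times K_2)\le\omega_n(K_1)\wedge\eta_n(K_2)$ give vague relative compactness and, in the limit, only the one-sided inequality $\beta(A\times\Lambda)\le\omega(A)$: they do not rule out mass of $\beta_n$ escaping to infinity in the second coordinate, which would leave the first marginal of $\beta$ strictly smaller than $\omega$; the $\sigma$-compact exhaustion does not repair this. What actually closes the argument is the finiteness of $L$: by Chebyshev, $\beta_n\bigl(K\times(\Lambda\setminus B_R)\bigr)\le \kappa(K,\Lambda\setminus B_R)^{-2}\int\kappa^2\,\mathrm{d}\beta_n$, which tends to $0$ as $R\to\infty$ uniformly in $n$, so no mass escapes and the cutoff argument then does identify the marginals. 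You need to state and use this uniform tightness coming from the cost bound; as written, the tools you invoke are insufficient for the conclusion you claim. Two smaller points: exactly optimal couplings $\beta_n$ need not exist when the masses are infinite, so take $\varepsilon_n$-optimal ones; and one should note that a vague limit of integer-valued measures is integer-valued, so that $\beta$ is indeed an element of $\Gamma_{\Lambda\times\Lambda}$ and hence admissible in the infimum defining $\rho_2(\omega,\eta)$.
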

\begin{proof}
  It is immediate for the trivial distance $\rho _0$ and it is proved
  in Lemma~4.1 in \cite{MR1730565} for the Wasserstein distance
  $\rho_2$. To verify this property for the total variation distance
  $\rho_1$, let $\alpha$ be a real number and consider $J_\alpha$
  defined by
  \begin{equation*}
    J_\alpha = \{(\omega, \eta) \in \Gamma_\Lambda\times \Gamma_\Lambda :\ \rho_1(\omega, \eta)\le \alpha\}.
  \end{equation*}
  Let $((\omega_n, \eta_n) , \, n\ge 1)$ converge vaguely to $(\omega,
  \eta)$ and such that for any $n$, $(\omega_n, \eta_n)$ belongs to
  $J_\alpha.$ By the triangular inequality, we have for any compact
  set $K$ and any $n$:
  \begin{eqnarray*}
    \rho _1(\pi_K\omega, \pi_K\eta) &\leq & \rho _1(\pi_K\omega, \pi_K\omega _n) + \alpha + \rho _1(\pi_K\eta _n, \pi_K\eta) ,
  \end{eqnarray*}
  where $\pi_K$ denotes the restriction to $K$ of a
  configuration. Hence using the vague convergence, we obtain that
  $(\pi_K\omega , \pi_K\eta) \in J_\alpha $. Finally, since the metric space $\Lambda$ is $\sigma$-compact, the monotone
  convergence theorem for an exhaustive sequence of compacts
  $(K_p)_{p\in \N}$ entails that
  \begin{equation*}
    \rho_1(\omega, \eta) = \lim _{p\to +\infty} \rho_1(\pi _{K_p} \omega, \pi _{K_p}\eta) \le \alpha ,
  \end{equation*}
  hence the set $J_\alpha$ is vaguely closed.
\end{proof}
\paragraph{}Let us mention that Lemma~\ref{lem:lsc} entails the lower semi-continuity of the Rubinstein distance $\T _{\rho _i}$, $i\in \{ 0,1,2\}$, with respect to the weak topology on the space of probability measures on $\Gamma _\Lambda$, cf. for instance Remark~6.12 in \cite{Villani:2007fk}. In particular, since the space $\M(\Lambda)$ is equipped with the vague topology, then the application $\sigma \mapsto \mu _\sigma$ is continuous so that the mapping $\sigma \mapsto \T _{\rho _i} (\mu _{\sigma } , \nu)$, $i\in \{ 0,1,2\}$, is lower semi-continuous for any given probability measure $\nu$ on $\Gamma _\Lambda$. However for $i \in \{ 1, 2 \}$, the Rubinstein distances $\T _{\rho _i}$ is not continuous and might be infinite since the distance $\rho _i$ is very often infinite itself, as in the Wiener space situation of \cite{MR2036490}.
\paragraph{}Actually, we mention that our definitions do not coincide with some of
the usual definitions of (bounded) distances between point processes,
see for instance \cite{MR1708412,MR93g:60043,Schuhmacher:2005lx}. As
mentioned above, it is customary to use the classical notion of total
variation by considering normalized configurations, i.e.,
\begin{equation*}
  \widetilde{\rho _1}(\omega , \eta )=\rho _1 \left( \frac{\omega }{\omega (\Lambda )}, \frac{\eta}{\eta (\Lambda )} \right) ,
\end{equation*}
provided both configurations have finite total masses. It should be
noted that since $ \widetilde{\rho _1}$ is not lower semi-continuous,
the Kantorovich-Rubinstein duality Theorem is no longer satisfied, so
that we cannot use the identity (\ref{eq:9}) in our framework. For
instance, let $\Lambda=\R$, $\omega=\ee_0$ and
$\eta=\ee_1$. Choose $\omega_n=\ee_0+\ee_n$ and $\eta_n=\ee_1+\ee_n$. As $n$ goes to infinity, $\omega_n$ and $\eta_n$ tend vaguely to $\omega$ and $\eta$ respectively. However, we have $\widetilde{\rho _1}(\omega , \eta )=2$ whereas $\widetilde{\rho _1}(\omega_n , \eta_n )= 1,$ for any integer $n \geq 2$. \\
It is also customary to replace $\rho_2$ by $ \widetilde{\rho _2}$
defined by
\begin{equation*}
  \widetilde{\rho _2}(\omega ,\eta )=
  \begin{cases}
    \frac{1}{\omega(\Lambda)} \, \rho _2 (\omega ,\eta )&\text{ if } \omega(\Lambda)=\eta(\Lambda)\neq 0,\\
    |\omega (\Lambda)-\eta (\Lambda)| & \text{ otherwise.}
  \end{cases}
\end{equation*}
The normalization by the inverse of $\omega(\Lambda)$ shrinks the
$\rho_2$ distance by a factor roughly equal to the expectation of
$\omega(\Lambda)^{-1}$, see \cite{Decreusefond:2006cv}. More
importantly, the term $ |\omega (\Lambda)-\eta (\Lambda)|$ has no
dimension (in the sense of dimensional analysis) whereas the term
involving $\rho_2$ has the dimension of a length.
Furthermore, the distance $\rho_2$ has interesting geometric properties of the space $\Gamma_\Lambda$ like the Rademacher property (see Lemma~\ref{lem:lip_wasserstein} below), not shared by $ \widetilde{\rho _2}.$
\subsection{Malliavin derivatives and the Rademacher property}
Before introducing the so-called Rademacher property on the
configuration space $\Gamma_\Lambda$, we need some additional
structure.
\begin{hyp} \label{H:1} Assume now that we have:
  \begin{itemize}
  \item A kernel $Q$ on $\Gamma_\Lambda\times \Lambda$, i.e. $Q(\cdot
    ,A)$ is measurable as a function on $\Gamma_{\Lambda}$ for any
    $A\in {\mathcal B}(\Lambda)$ and $Q(\omega , \cdot )$ is a positive Radon measure on $\B(\Lambda)$ for any
    $\omega\in\Gamma_\Lambda.$ We set $\d  \alpha(\omega, x) =
    Q(\omega, \d  x) \, \d  \mu _{\sigma }(\omega).$
  \item A gradient/Malliavin derivative $\nabla$, defined on a dense
    subset $\Dom \nabla$ of $L^2 (\mu _{\sigma })$, such that for any
    $F\in \Dom \nabla$,
$$
\int _{\Gamma _\Lambda } \int_{\Lambda} \vert \nabla_x F (\omega )
\vert ^2 \d \alpha  (\omega ,x) < + \infty ,
$$
\end{itemize}
i.e., the domain of the gradient is $\Dom \nabla = \{ F \in L^2 (\mu _{\sigma }) : \nabla F \in L^2 (\alpha )\}$.
\end{hyp}
\paragraph{} We say that a process $u=u(\omega, x)$ belongs to $\Dom \delta$
whenever there exists a constant $c$ such that for any $F\in \Dom
\nabla$,
\begin{equation*}
  \left \vert \int _{\Gamma _\Lambda } \int_{\Lambda} \nabla_{x} F (\omega ) \, u( \omega ,x ) \d \alpha (\omega , x) \right \vert \le c \Vert F\Vert _{L^2 (\mu _{\sigma } )}.
\end{equation*}
For such a process, we define the operator $\delta$ by duality:
\begin{equation} \label{E:DEFQ} \int_{\Gamma _\Lambda } \int_{\Lambda}
  \nabla_x F(\omega) \, u(\omega, x) \d \alpha (\omega , x) = \int
  _{\Gamma _\Lambda } F (\omega ) \, \delta u (\omega ) \d \mu
  _{\sigma } (\omega ) .
\end{equation}
Denote the self-adjoint operator $\L = \delta \nabla$ acting on its domain $\Dom \L \subset \Dom \nabla$ and let $(P_t)_{t\geq 0}$ be the associated
Ornstein-Uhlenbeck semi-group, i.e. the semi-group whose infinitesimal generator is $-\L$.
\paragraph{}Once the stochastic gradient has been introduced, let us relate it to
the geometry of the configuration space $\Gamma _\Lambda $.
\begin{definition}
  \label{def:rademacher}
  Given a distance $\rho$ and a gradient $\nabla$ on $\Gamma
  _\Lambda$, we say that the couple $(\nabla, \rho )$ has the
  Rademacher property whenever
\begin{equation}
\label{eq:rademacher}
\rho -\Lip_1 \subset \Dom \nabla \quad \quad and \quad \quad \vert \nabla_x F (\omega) \vert \le 1, \quad \alpha \text{-a.e.}
\end{equation}
\end{definition}
\paragraph{}To investigate the Rubinstein distance associated to a distance on
$\Gamma_\Lambda$, it will be of crucial importance to find the
convenient notion of gradient for which the Rademacher property
holds.
\paragraph{} {\bf \textit{Discrete gradient on configuration space.}} 
Given a functional $F\in L^2 (\mu _\sigma )$, the discrete gradient of $F$,
denoted by $\GD F$, is defined by
\begin{equation*}
  \GD_x F(\omega )= F(\omega + \varepsilon_x)-F(\omega ) , \quad (\omega , x) \in \Gamma _\Lambda \times \Lambda .
\end{equation*}
In particular, $\Dom \GD$ is the subspace of $L^2 (\mu _\sigma )$
random variables such that
\begin{equation*}
  \esp{\mu _\sigma }{\int_\Lambda |\GD_x F|^2 \d\sigma(x)}<+\infty.
\end{equation*}
We set $\QD(\omega , \d x)= \d \sigma (x)$ so that $\alphaD=\mu
_{\sigma }\otimes\sigma.$
The $n$-th multiple stochastic integral of a real-valued
square-integrable symmetric function $f_n \in L^2 (\sigma ^{\otimes
  n})$ is defined as
\begin{equation*}
  J_n(f_n) = \int _{\Delta _n} f_n (x_1,\ldots,x_n)\, \d(\omega -\sigma)(x_1)\ldots\d(\omega -\sigma)(x_n) ,
\end{equation*}
where $\Delta _n = \{ (x_1, \ldots , x_n) \in \Lambda ^n ,\ x_i \neq
x_j , \, i\neq j \}$. As a convention, we identify $L^2 (\sigma ^{\otimes
  0})$ to $\R$ and let $J_0(f_0) = f_0$, $f_0 \in L^2 (\sigma ^{\otimes
  0}) \simeq \R$. We have the isometry formula
\begin{equation}
  \label{eq:isometrie}
  \esp{\mu _{\sigma } }{J_n (f_n)J_m (f_m)} = n! \, \car _{ \{ n=m\} } \, \int _{\Lambda ^n} f_n \, f_m \, \d \sigma ^{\otimes n}.
\end{equation}
According to \cite{ruiz85,nualart88_1}, the Chaotic Representation
Property holds on the configuration space, i.e., every functional
$F\in L^2 (\mu _{\sigma } )$ can be written as
\begin{equation*}
  F= \esp{\mu _{\sigma } }{F} + \sum_{n=1}^{+\infty} J_n(f_n) .
\end{equation*}
Moreover, if $F \in \Dom \GD$, then the discrete gradient acts on multiple stochastic integrals as
\begin{align*}
  \GD_x F = \sum_{n=1}^{+\infty} n J_{n-1}(f_n (\cdot ,x)) , \quad \alphaD \text{-a.e.}
\end{align*}
Denote $\DD$ the adjoint operator of $\GD$ in the sense of
(\ref{E:DEFQ}). Then the self-adjoint number operator $\LD = \DD \GD$ has the following
expression in terms of chaos:
$$
\LD F = \sum _{n=1} ^{+\infty } n J_n (f_n) ,
$$
whenever $F \in \Dom \LD$, and the associated Ornstein-Uhlenbeck semi-group $( \PD_t)_{t\geq 0}$ is given by
$$
\PD _t F = \esp{\mu _{\sigma } }{F} + \sum _{n=1} ^{+\infty } e^{-nt} J_n (f_n).
$$
Hence the invariance property of the Poisson measure $\mu _{\sigma }$ with respect to the semi-group reads as $\mathbb{E} _{\mu _{\sigma }} [\PD _t F ] = \esp{\mu _{\sigma } }{F}$. Moreover, we have the commutation property between gradient and semi-group, which will be useful in the sequel: if $F \in \Dom \GD$,
\begin{equation}
  \label{eqn:commutation}
  \GD_{x} \PD_t F = e^{-t} \PD_t \GD_x F , \quad x \in
  \Lambda, \quad \, t \geq 0 .
\end{equation}
By the isometry formula~(\ref{eq:isometrie}), the semi-group is exponentially ergodic in $L^2(\mu _\sigma)$ with respect to the Poisson measure $\mu _{\sigma }$, i.e., for any $t\geq 0$,
\begin{eqnarray*}
\Vert P_t F - \esp{\mu _{\sigma } }{F} \Vert _{L^2 (\mu _\sigma)} ^2 & = & \sum _{n\geq 1} e^{-2nt} \, \esp{\mu _{\sigma } }{J_n (f_n) ^2} \\
& \leq & e^{-2t} \, \Vert F - \esp{\mu _{\sigma } }{F} \Vert _{L^2 (\mu _\sigma)} ^2 .
\end{eqnarray*}
\paragraph{}Using the discrete gradient, the distances of interest on
$\Gamma_\Lambda$ are the trivial distance $\rho _0$ and the total
variation distance $\rho _1$, as illustrated by the following Lemma.
\begin{lemma}\label{lem:lip_discrete} Assume that the intensity measure
  $\sigma$ is finite on $\Lambda$. Then the couples $(\GD, \rho _0)$ and
  $(\GD, \rho _1)$ satisfy the Rademacher property $(\ref{eq:rademacher})$.
\end{lemma}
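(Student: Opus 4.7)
The plan is to verify the Rademacher property directly from the definitions of the discrete gradient and of the two distances. The first observation is that $\rho_0 \leq \rho_1$ pointwise on $\Gamma_\Lambda \times \Gamma_\Lambda$ (both vanish on the diagonal, and off the diagonal $\rho_1 \geq 1 = \rho_0$). Consequently $\rho_0\text{-}\Lip_1 \subset \rho_1\text{-}\Lip_1$, so it suffices to establish the Rademacher property for the couple $(\GD,\rho_1)$.

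Next, I would fix $F \in \rho_1\text{-}\Lip_1$ and evaluate $\rho_1(\omega, \omega+\ee_x)$. Since the intensity $\sigma$ is diffuse, $x \notin \omega$ for $\alphaD$-a.e.\ $(\omega,x)$; unwinding the definition of $\rho_1$ gives $\omega \Delta (\omega+\ee_x)(\Lambda) = 0$ and $(\omega+\ee_x)\Delta\omega(\Lambda) = 1$, so $\rho_1(\omega, \omega+\ee_x) = 1$ $\alphaD$-a.e. The $1$-Lipschitz property of $F$ then yields
\[
\lvert \GD_x F(\omega) \rvert = \lvert F(\omega+\ee_x) - F(\omega)\rvert \leq \rho_1(\omega,\omega+\ee_x) = 1, \quad \alphaD\text{-a.e.},
\]
which is the pointwise half of (\ref{eq:rademacher}).

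Finally, to check that $F \in \Dom\GD$, I would invoke that elements of $\rho_1\text{-}\Lip_1$ are \emph{bounded} Lipschitz functions (see the definition following (\ref{eq:9})), so $F \in L^2(\mu_\sigma)$. Combining boundedness with the pointwise gradient estimate and the finiteness of $\sigma$,
\[
\esp{\mu_\sigma}{\int_\Lambda \lvert \GD_x F(\omega)\rvert^2 \,\d\sigma(x)} \leq \sigma(\Lambda) < +\infty,
\]
which gives $F \in \Dom\GD$ and completes the argument for $(\GD,\rho_1)$; the inclusion noted in the first paragraph then transfers the conclusion to $(\GD,\rho_0)$. There is no real obstacle here: the only place where the hypothesis $\sigma(\Lambda)<+\infty$ is used is the last display, where it is needed to ensure $\GD F \in L^2(\alphaD)$; without it the pointwise bound $\lvert \GD_x F\rvert \leq 1$ would still hold but the square-integrability against $\alphaD$ could fail.
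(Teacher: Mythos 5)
Your proof is correct and follows essentially the same route as the paper's: the key step in both is the pointwise bound $\lvert \GD_x F(\omega)\rvert = \lvert F(\omega+\ee_x)-F(\omega)\rvert \leq \rho_i(\omega+\ee_x,\omega) \leq 1$, followed by integrating against the finite measure $\sigma$ to get $F \in \Dom\GD$. The only cosmetic difference is that you reduce the $\rho_0$ case to the $\rho_1$ case via the inclusion $\rho_0\text{-}\Lip_1 \subset \rho_1\text{-}\Lip_1$, whereas the paper treats both distances simultaneously with the single estimate $\rho_i(\omega+\ee_x,\omega)\leq 1$.
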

\begin{proof}
  Letting $F \in \rho_i -\Lip_1$, $i \in \{ 0,1 \},$ we have by the very
  definition of the discrete gradient:
$$
\vert \GD_x F(\omega ) \vert = \vert F(\omega +\varepsilon_x)-F(\omega
)\vert \leq \rho _i (\omega +\varepsilon_x, \omega ) \leq 1.
$$
Since $\sigma$ is finite, it follows that
\begin{equation*}
  \int _\Lambda \vert \GD_x F(\omega ) \vert ^2 \d\sigma (x) \le \sigma(\Lambda),
\end{equation*}
hence that $F$ belongs to $\Dom \GD$. The proof is achieved.
\end{proof}
\paragraph{}Note that the converse direction holds for the total variation
distance $\rho _1$. Indeed, consider two configurations $\omega$ and
$\eta$. If $\rho _1(\omega, \eta)=+\infty$, there is nothing to
prove. If $\rho _1(\omega, \eta)$ is finite, then since $|\GD_x
F(\omega )|\le 1$, $\alpha ^\sharp$-a.e., we get
\begin{align*}
  |F(\eta)-F(\omega)|&\le |F(\eta \cap \omega \, \cup \, \eta\Delta
  \omega)-F(\eta\cap \omega)| + |F(\eta \cap \omega \, \cup \,
  \omega\Delta
  \eta)-F(\eta\cap \omega)|\\
  &\le (\eta\Delta
  \omega) (\Lambda) + (\omega\Delta \eta)(\Lambda) \\
  & = \rho _1(\eta ,\omega) .
\end{align*}
\paragraph{} {\bf \textit{Differential gradient on configuration space.}} Let us introduce another stochastic gradient on the configuration space $\Gamma_\Lambda$ which is a derivation, see \cite{MR99d:58179, MR1730565}. Given the Euclidean space $\Lambda = \R ^k$, let $V(\Lambda)$
be the space of ${\mathcal C}^\infty$ vector fields on $\Lambda$ and
$V_0(\Lambda)\subset V(\Lambda),$ the subspace consisting of all
vector fields with compact support. For $v\in V_0(\Lambda),$ for any
$x\in \Lambda,$ the curve
\begin{displaymath}
  t\mapsto {\mathcal V}_t^v(x)\in \Lambda
\end{displaymath}
is defined as the solution of the following Cauchy problem
\begin{equation}
  \label{eq:D2}
  \begin{cases}
    \frac{\d}{\d t}{\mathcal V}_t^v(x) & = \, \, \, v({\mathcal V}_t^v(x)),\\
    {\mathcal V}_0^v(x) & = \, \, \, x.
  \end{cases}
\end{equation}
The associated flow $({\mathcal V}_t^v, t\in \R)$ induces a curve
$({\mathcal V}_t^v)^*\omega =\omega \circ ({\mathcal V}_t^v)^{-1}$,
$t\in \R$, on $\Gamma_\Lambda$: if $\omega =\sum_{x\in \omega
}\varepsilon_x$ then $({\mathcal V}_t^v)^*\omega =\sum_{x\in \omega
}\varepsilon_{{\mathcal V}_t^v(x)}.$ We are then in position to define
a notion of differentiability on $\Gamma_\Lambda$. We take $\QC
(\omega,\d x)= \d \omega (x)=\sum_{y\in \omega } \d
\varepsilon_{y}(x)$ and $\d\alphaC(\omega, x)=\d\omega(x)\d\mu
_{\sigma }(\omega).$ A measurable function $F :\Gamma_\Lambda \to \R$
is said to be differentiable if for any $v\in V_0(\Lambda)$, the
following limit exists:
\begin{equation*}
  \lim_{t\to 0} \, \frac{ F({\mathcal V}_t^v(\omega ))-F(\omega )}{t} .
\end{equation*}
We denote $\GC_vF(\omega )$ the preceding quantity. The domain of
$\GC$ is then the set of integrable and differentiable functions such
that there exists a process $(\omega,\, x)\mapsto \GC_x F(\omega )$
which belongs to $L^2(\alphaC)$ and satisfies
\begin{equation*}
  \GC_v F (\omega) =\int_\Lambda \GC_x F (\omega) v(x) \d \omega(x).
\end{equation*}
We denote by $\DC$ the adjoint operator of $\GC$ in the sense of (\ref{E:DEFQ}). Note that the integration in the left-hand-side of the duality formula (\ref{E:DEFQ}) is made with respect to a configuration $\omega$, whereas the intensity measure $\sigma$ is involved in the case of the discrete gradient. Given the self-adjoint operator $\LC = \DC \GC$, the associated Ornstein-Uhlenbeck semi-group $(\PC _t)_{t\geq 0}$ is ergodic in $L^2(\mu _\sigma)$ with respect to the Poisson measure $\mu _\sigma$, cf. Theorem~4.3 in \cite{MR99d:58179}. However, in contrast to the case of the discrete gradient, there is no known commutation relationship between the gradient $\GC$ and the semi-group $\PC _t$.
\paragraph{} The distance we focus on in this part is the Wasserstein distance $\rho _2$. We have the following lemma.
\begin{lemma}\label{lem:lip_wasserstein}
The couple $(\GC , \rho _2)$ satisfies the Rademacher property $(\ref{eq:rademacher})$.
\end{lemma}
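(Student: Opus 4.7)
The plan is to fix $F \in \rho _2\text{-}\Lip _1$ and a vector field $v \in V_0(\Lambda )$ with compact support $K$, and to produce an explicit coupling between $\omega$ and $({\mathcal V}_t^v)^*\omega$ that forces the directional derivative of $F$ along $v$ to be controlled by $\|v\|_{L^2(\omega )}$. Since $v$ vanishes outside $K$, only the finitely many atoms of $\omega$ lying in $K$ are displaced by the flow, so that $\beta _t = \sum _{x\in \omega}\varepsilon _{(x,{\mathcal V}_t^v(x))}$ belongs to $\Sigma (\omega ,({\mathcal V}_t^v)^*\omega )$. The first-order expansion ${\mathcal V}_t^v(x) = x + tv(x) + O(t^2)$ uniform in $x\in K$ coming from the Cauchy problem \eqref{eq:D2}, together with the very definition of $\rho _2$, then yields
\begin{equation*}
\rho _2 (\omega ,({\mathcal V}_t^v)^*\omega )^2 \;\leq\; \int _\Lambda \kappa (x,{\mathcal V}_t^v(x))^2\,\d \omega (x) \;=\; t^2 \int _\Lambda |v(x)|^2 \,\d \omega (x) + O(t^3) .
\end{equation*}

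Combining with the Lipschitz hypothesis $|F(\eta )-F(\omega )|\le \rho _2 (\eta ,\omega )$ gives $|F(({\mathcal V}_t^v)^*\omega )-F(\omega )| \le |t|\,\|v\|_{L^2(\omega )}+O(t^2)$, so that $t\mapsto F(({\mathcal V}_t^v)^*\omega )$ is locally Lipschitz in $t$ with slope at most $\|v\|_{L^2(\omega )}$. The next step is to upgrade this bound to the genuine existence of the directional derivative at $t=0$: observing that the map depends on $t$ only through the smooth finite-dimensional curve $t\mapsto ({\mathcal V}_t^v(x_1),\ldots ,{\mathcal V}_t^v(x_n))\in K^n$, where $x_1,\ldots ,x_n$ are the atoms of $\omega$ in $K$, the $\rho _2$-Lipschitz property of $F$ translates into ordinary Lipschitz continuity on $K^n/{\mathfrak S}_n$ for the Euclidean distance. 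A density argument using cylindrical functionals $F(\omega )=f(\int \phi _1\,\d \omega ,\ldots ,\int \phi _m\,\d \omega )$ — for which $\GC _x F(\omega )=\sum _i \partial _i f\,\nabla \phi _i(x)$ is explicit and the bound $|\nabla \phi _i|\le 1$ follows from testing the Lipschitz property on single-point configurations — together with a finite-dimensional Rademacher argument then identifies the limit $\GC _v F(\omega )$ for $\mu _\sigma $-a.e.~$\omega$ and yields $|\GC _v F(\omega )|\le \|v\|_{L^2(\omega )}$.

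Once this is obtained, the representation $\GC _v F(\omega )=\int _\Lambda \GC _x F(\omega )\cdot v(x)\,\d \omega (x)$ can be tested against vector fields concentrated around a single atom of $\omega$ pointing in an arbitrary unit direction, forcing the pointwise bound $|\GC _x F(\omega )|\le 1$ for $\omega $-a.e.~$x$, i.e.\ $\alphaC$-a.e.~on $\Gamma _\Lambda \times \Lambda$. Since $F$ is bounded it lies in $L^2(\mu _\sigma )$, and (under the implicit finiteness of $\sigma$, as already imposed in Lemma~\ref{lem:lip_discrete}) the $L^\infty $ bound just obtained places $\GC F$ in $L^2(\alphaC )$, so that $F\in \Dom \GC $ and the Rademacher property is established.

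The main obstacle is precisely the passage from Lipschitz control of $t\mapsto F(({\mathcal V}_t^v)^*\omega )$ to the \emph{pointwise} existence of its derivative at $t=0$: the coupling only delivers the former, whereas the definition of $\Dom \GC $ demands the latter for every test vector field $v$. The classical Rademacher theorem on $\R$ only provides differentiability at a.e.~$t$, which is not by itself enough; bridging the gap requires either the finite-dimensional reduction on $K^n/{\mathfrak S}_n$ sketched above or a density argument based on cylindrical functionals combined with an $L^2(\alphaC )$ closure estimate for the gradient, and this is where the proof is genuinely more delicate than in the discrete case of Lemma~\ref{lem:lip_discrete}.
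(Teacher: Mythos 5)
There is a genuine gap, and you have in fact named it yourself. The paper does not prove this lemma from scratch: it invokes Theorem~1.3 of R\"ockner--Schied \cite{MR1730565}, which is precisely the Rademacher theorem on configuration space and asserts that every $F\in\rho_2-\Lip_1$ lies in $\Dom\GC$ with $\int_\Lambda|\GC_xF(\omega)|^2\,\d\omega(x)\le 1$ $\mu_\sigma$-a.s.; the lemma then follows in one line. Your opening step is correct and is indeed the key geometric input of that theorem: the coupling $\beta_t=\sum_{x\in\omega}\varepsilon_{(x,{\mathcal V}_t^v(x))}$ gives $\rho_2(\omega,({\mathcal V}_t^v)^*\omega)\le |t|\,\|v\|_{L^2(\omega)}+O(t^2)$, hence Lipschitz control of $t\mapsto F(({\mathcal V}_t^v)^*\omega)$. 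But the passage from this to the actual existence of $\GC_vF(\omega)$ and of a representing kernel $\GC_xF(\omega)$ in $L^2(\alphaC)$ --- which is what membership in $\Dom\GC$ demands --- is not bridged by either of the two devices you sketch. The finite-dimensional Rademacher theorem on $K^n/{\mathfrak S}_n$ yields differentiability only Lebesgue-a.e.\ in $K^n$, which says nothing about the particular point $(x_1,\dots,x_n)$ given by the atoms of $\omega$, and does not even transfer to $\mu_\sigma$-a.e.\ $\omega$ unless $\sigma$ is absolutely continuous with respect to Lebesgue measure, which is not assumed (only diffuseness is). The alternative density argument via cylindrical functionals requires closability of $\GC$ and a lower semicontinuity/limiting argument to identify the gradient of the limit, none of which is supplied; this is exactly the technical content of \cite{MR1730565}, whose proof is substantially longer than your sketch suggests.

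A secondary point: your route delivers only the pointwise bound $|\GC_xF(\omega)|\le 1$, whereas the cited theorem gives the stronger $\sum_{x\in\omega}|\GC_xF(\omega)|^2\le 1$. With only the pointwise bound you need $\sigma(\Lambda)<\infty$ to conclude $\GC F\in L^2(\alphaC)$ (since $\int\int|\GC_xF|^2\,\d\alphaC\le\esp{\mu_\sigma}{\omega(\Lambda)}=\sigma(\Lambda)$), an assumption the lemma does not make and which the paper's proof does not need. So even granting the differentiability step, your argument proves a weaker statement than the one claimed.
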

\begin{proof}
  The proof is straightforward. Indeed, letting $F \in \rho
  _2-\Lip_1$, we know from Theorem~1.3 in \cite{MR1730565} that $F\in
  \Dom \GC$ and that
$$
\sum _{x\in \omega } \vert \GC _x F (\omega ) \vert ^2 = \int _\Lambda
\vert \GC _x F (\omega ) \vert ^2 \d \omega (x) \leq 1 , \quad \mu
_{\sigma }\text{-a.s.}
$$
Hence we obtain $\vert \GC _x F (\omega ) \vert \leq 1, \
\alphaC$-a.e., in other words the Rademacher property $(\ref{eq:rademacher})$ is satisfied.
\end{proof}
\section{Upper bounds on Rubinstein distances}
\label{sec:bounds}
\subsection{An abstract upper bound on Rubinstein distances}
\label{sec:abstract}
Let us establish first an abstract upper bound on the Rubinstein
distance by using a semi-group method, provided the associated couple
gradient/distance satisfies the Rademacher property (\ref{eq:rademacher}). Denote $\rho$ a lower semi-continuous distance on the configuration space $\Gamma _\Lambda$ and assume that Hypothesis~\ref{H:1} is fulfilled.
\begin{proposition}
  \label{T:E0}
Assume that the couple $(\nabla , \rho)$ satisfies the Rademacher property (\ref{eq:rademacher}). Let $L$ be the density of an absolutely continuous probability measure $\nu$ with respect to $\mu_\sigma$. Then provided the inequality makes sense, the following upper bound on the Rubinstein distance holds:
  \begin{equation}
    \label{eq:14}
    \T_{\rho }(\mu_\sigma, \nu) \leq \int _{\Gamma _\Lambda } \int_\Lambda \left \vert \int_0^{+\infty} \nabla _x P_t L (\omega ) \d t \right \vert \d \alpha (\omega , x) .
  \end{equation}
\end{proposition}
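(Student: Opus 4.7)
The plan is to combine the Kantorovich-Rubinstein duality with a semi-group interpolation between $\nu$ and $\mu_\sigma$, and then invoke the Rademacher hypothesis to kill the $F$-dependence.

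First, I would fix $F \in \rho$-$\Lip_1$ (which we may assume bounded, to justify the manipulations) and rewrite the difference of integrals as
\begin{equation*}
\int_{\Gamma_\Lambda} F \, \d(\mu_\sigma - \nu) \;=\; -\int_{\Gamma_\Lambda} F\,(L-1) \, \d\mu_\sigma.
\end{equation*}
The key point is to express $L-1$ through the Ornstein-Uhlenbeck semi-group. Since $\nu$ is a probability measure, $\esp{\mu_\sigma}{L}=1$, and by the $L^2$-ergodicity of $(P_t)_{t\ge 0}$ stated just above the theorem, one has $P_t L \to 1$ in $L^2(\mu_\sigma)$ as $t\to +\infty$. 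Using $\frac{\d}{\d t}P_tL = -\L P_t L$ and integrating from $0$ to $+\infty$, this gives the representation
\begin{equation*}
L - 1 \;=\; \int_0^{+\infty} \L P_t L \, \d t \;=\; \int_0^{+\infty} \delta \nabla P_t L \, \d t,
\end{equation*}
at least formally; making this rigorous (in $L^2$ or under suitable integrability of $L$, so that both sides make sense in the duality pairing with $F$) is a routine but necessary verification.

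Next I would plug this into the Kantorovich-Rubinstein dual expression, exchange the integrals via Fubini, and use the duality formula~(\ref{E:DEFQ}) defining $\delta$:
\begin{equation*}
\int_{\Gamma_\Lambda} F\,(L-1)\,\d\mu_\sigma \;=\; \int_0^{+\infty} \!\!\int_{\Gamma_\Lambda} F \cdot \delta \nabla P_t L \, \d\mu_\sigma \, \d t \;=\; \int_0^{+\infty}\!\! \int_{\Gamma_\Lambda}\!\!\int_\Lambda \nabla_x F(\omega)\, \nabla_x P_t L(\omega) \, \d\alpha(\omega,x) \, \d t.
\end{equation*}
A second Fubini then allows me to pull the time integral inside the spatial ones. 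Taking absolute values, invoking the Rademacher property $|\nabla_x F| \le 1$ given by Definition~\ref{def:rademacher}, and finally taking the supremum over $F \in \rho$-$\Lip_1$, one arrives via~(\ref{eq:9}) at the announced inequality
\begin{equation*}
\T_\rho(\mu_\sigma, \nu) \;\le\; \int_{\Gamma_\Lambda}\int_\Lambda \Bigl| \int_0^{+\infty} \nabla_x P_t L(\omega) \, \d t \Bigr| \, \d\alpha(\omega,x).
\end{equation*}

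The main obstacle will be the integrability bookkeeping: the statement contains the caveat "provided the inequality makes sense", which flags that one must justify (i) that $\int_0^{+\infty} \nabla_x P_t L(\omega) \, \d t$ is well-defined $\alpha$-a.e., (ii) that the two applications of Fubini are legitimate, and (iii) that the use of the duality formula is valid for $F$ in $\rho$-$\Lip_1$ and not only in $\Dom\nabla$ (for which the Rademacher property guarantees $\rho$-$\Lip_1 \subset \Dom\nabla$). A standard route is to reduce first to bounded $F$, truncate the time integral at $T<+\infty$, apply the ergodicity to pass to the limit, and then use monotone/dominated convergence on the right-hand side once finiteness is assumed. Apart from these technicalities, the argument is a direct semi-group-plus-duality computation.
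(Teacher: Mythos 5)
Your argument is correct and is essentially the paper's own proof: a semi-group interpolation combined with the duality formula (\ref{E:DEFQ}) for $\delta$, the Rademacher property to bound $|\nabla_x F|$ by $1$, and the Kantorovich--Rubinstein duality (\ref{eq:9}). The only (immaterial) difference is that you apply the semi-group directly to $L$ via $L-1=\int_0^{+\infty}\L P_tL\,\d t$, whereas the paper interpolates on $F$ and then uses the self-adjointness of $P_t$ to shift it onto $L$, arriving at the same intermediate identity; the integrability caveats you list are exactly those the paper sweeps under ``provided the inequality makes sense''.
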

\begin{proof}
 The proof follows the approach emphasized by Houdr\'e and Privault in \cite{MR1962538} to derive covariance identities and then concentration inequalities.
  Letting $F \in \rho -\Lip_1$, we have by reversibility and using Fubini's Theorem:
  \begin{eqnarray*}
    \int _{\Gamma _\Lambda } F \d (\mu _{\sigma } -\nu ) &= & \int _{\Gamma _\Lambda } \left(\int _{\Gamma _\Lambda } F \d \mu _{\sigma } -F \right) L \d \mu _{\sigma } \\
    & = &\int _{\Gamma _\Lambda } \left( \int_0^{+\infty} \frac{\d}{\d t} P_t F \d t \right) L \d \mu _{\sigma } \\
    & = &- \int _{\Gamma _\Lambda } \int_0^{+\infty} P_t \L F \, L \d t \d \mu _{\sigma } \\
    & = &- \int _{\Gamma _\Lambda } \int_0^{+\infty} \delta \nabla F \, P_t L \d t \d \mu _{\sigma } \\
    & = &- \int _{\Gamma _\Lambda } \int _\Lambda \nabla _x F \int_0^{+\infty} \nabla _x P_t L \d t \d \alpha (\cdot , x) .
  \end{eqnarray*}
  Using then the Rademacher property (\ref{eq:rademacher}), the result holds by taking the supremum over all functions $F \in \rho -\Lip _1 $.
\end{proof}
\paragraph{}Note that the upper bound in the inequality (\ref{eq:14}) is interesting in its
own right, but seems to be somewhat difficult to compute in full generality. Hence we turn in the sequel to more concrete situations, i.e., when the gradient of interest is the discrete gradient $\GD$ or the differential one $\GC$ and is associated to the convenient distance $\rho _i$, $i \in  \{ 0,1,2\}$, in the sense of the Rademacher property (\ref{eq:rademacher}).
\subsection{A qualitative upper bound on $\T_{\rho _1}$}
\label{sec:rho_1-rubinst-dist}
Once the abstract estimate (\ref{eq:14}) has been obtained, one notices that it might be simplified whenever a commutation relation between gradient and semi-group holds. To the knowledge of the authors, such a property is only verified in the case of the discrete gradient, so that we focus in this part on the couple $(\GD , \rho _1)$. Here is one of the two main results of the paper.
\begin{theorem}
  \label{T:E1}
  Let $L$ be the density of an absolutely continuous probability
  measure $\nu$ with respect to $\mu_\sigma$, and assume that $L\in
  \Dom \GD$ and $\GD L \in L^1(\mu _\sigma \otimes \sigma ).$ Then we
  get the following estimate:
  \begin{align}\label{eq:14b}
    \T_{\rho _1}(\mu_\sigma, \nu) & \le \esp{\mu _{\sigma }}{
      \int_{\Lambda} \vert \GD_x L \vert \d \sigma(x)} .
  \end{align}
  The same inequality also holds under the distance $\rho _0$.
\end{theorem}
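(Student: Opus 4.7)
The plan is to specialize the abstract bound of Proposition~\ref{T:E0} to $\nabla=\GD$ and $\rho=\rho_1$, and then leverage the commutation relation (\ref{eqn:commutation}) together with the $L^1$-contractivity of the Ornstein--Uhlenbeck semi-group $(\PD_t)_{t\geq 0}$ with respect to $\mu_\sigma$.

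First I would appeal to Lemma~\ref{lem:lip_discrete} to obtain the Rademacher property for the couple $(\GD,\rho_1)$. Since $\alphaD=\mu_\sigma\otimes\sigma$, Proposition~\ref{T:E0} yields
\begin{equation*}
\T_{\rho_1}(\mu_\sigma,\nu)\le \esp{\mu_\sigma}{\int_\Lambda \left|\int_0^{+\infty}\GD_x \PD_t L\,\d t\right|\d\sigma(x)}.
\end{equation*}
Next I would substitute the commutation identity $\GD_x\PD_t L=e^{-t}\PD_t\GD_x L$, pull the absolute value inside the $\d t$-integral, and apply Fubini's theorem (justified by the hypothesis $\GD L\in L^1(\mu_\sigma\otimes\sigma)$ together with the $L^1(\mu_\sigma)$-boundedness of $\PD_t$). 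This produces
\begin{equation*}
\T_{\rho_1}(\mu_\sigma,\nu)\le \int_0^{+\infty}e^{-t}\,\esp{\mu_\sigma}{\int_\Lambda |\PD_t \GD_x L|\,\d\sigma(x)}\d t.
\end{equation*}

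Finally, using that $\PD_t$ is Markovian with invariant measure $\mu_\sigma$, the pointwise inequality $|\PD_t f|\le \PD_t|f|$ combined with the invariance $\esp{\mu_\sigma}{\PD_t|f|}=\esp{\mu_\sigma}{|f|}$ yields, for each fixed $x$, $\esp{\mu_\sigma}{|\PD_t \GD_x L|}\le \esp{\mu_\sigma}{|\GD_x L|}$. Since $\int_0^{+\infty}e^{-t}\d t=1$, this gives precisely the announced estimate (\ref{eq:14b}). For the trivial distance $\rho_0$ the identical argument applies, since $(\GD,\rho_0)$ also satisfies the Rademacher property by Lemma~\ref{lem:lip_discrete}.

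The main obstacle is essentially technical: checking that the iterated integral $\int_0^{+\infty}\GD_x\PD_t L\,\d t$ is well-defined and that Fubini applies at every step, both of which are guaranteed by the assumption $\GD L\in L^1(\mu_\sigma\otimes\sigma)$ combined with the exponential decay factor $e^{-t}$. The conceptual heart of the proof is the commutation identity (\ref{eqn:commutation}), a feature specific to the discrete gradient, which makes the $e^{-t}$ factor absorb the time integration and fully decouples the semi-group from the density $L$. This is precisely what distinguishes the clean bound (\ref{eq:14b}) from the abstract form of Proposition~\ref{T:E0}; the alternative Clark-formula derivation alluded to in the introduction would serve as a useful cross-check but proceeds along entirely different lines.
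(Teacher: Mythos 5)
Your argument coincides with the paper's first proof of Theorem~\ref{T:E1}: invoke the Rademacher property of $(\GD,\rho_1)$ from Lemma~\ref{lem:lip_discrete}, apply Proposition~\ref{T:E0}, use the commutation relation (\ref{eqn:commutation}), push the absolute value inside via $|\PD_t f|\le \PD_t|f|$, and conclude with the invariance of $\mu_\sigma$ under $\PD_t$ and $\int_0^{+\infty}e^{-t}\d t=1$. The treatment of $\rho_0$ is likewise identical.

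One step you skip, which the paper does address: Lemma~\ref{lem:lip_discrete} establishes the Rademacher property for $(\GD,\rho_1)$ only under the assumption that the intensity measure $\sigma$ is \emph{finite} (this is what guarantees $\rho_1-\Lip_1\subset\Dom\GD$, since the bound $|\GD_x F|\le 1$ only gives $\int_\Lambda|\GD_x F|^2\d\sigma\le\sigma(\Lambda)$). The theorem, however, is stated for a general $\sigma\in\M(\Lambda)$. Your proof as written therefore covers only the finite case; to pass to general $\sigma$ the paper uses the $\sigma$-compactness of $\Lambda$ to exhaust it by sets of finite measure and the lower semi-continuity of $\sigma\mapsto\T_{\rho_1}(\mu_\sigma,\nu)$ noted after Lemma~\ref{lem:lsc}. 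You should either add this limiting argument or restrict your invocation of Lemma~\ref{lem:lip_discrete} to the finite-intensity case and then extend. Everything else in your proposal, including the integrability justifications via $\GD L\in L^1(\mu_\sigma\otimes\sigma)$ and the factor $e^{-t}$, matches the paper's reasoning.
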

\begin{proof}
Since the case of a general intensity measure $\sigma \in \M (\Lambda )$ might be established by a simple limiting procedure (use the $\sigma$-compactness of the metric space $\Lambda$ and the lower semi-continuity of the application $\sigma \mapsto \T_{\rho _1}(\mu_\sigma, \nu)$), let us assume that $\sigma$ is finite, so that the Rademacher property stated in Lemma~\ref{lem:lip_discrete} is satisfied by the couple $(\GD , \rho _1)$. Hence Proposition~\ref{T:E0} above entails the inequality
\begin{eqnarray*}
\T_{\rho _1}(\mu_\sigma, \nu) & \leq & \esp{\mu_\sigma}{\int_{\Lambda} \left \vert \int_0^{+\infty} \GD_{x} \PD_{t} L \d t \right \vert \d \sigma (x)}.
\end{eqnarray*}
Using now the commutation relation (\ref{eqn:commutation}), we have:
\begin{eqnarray}
\label{eq:first_T1}
\T_{\rho _1}(\mu_\sigma, \nu) & \le & \esp{\mu_\sigma}{\int_{\Lambda} \left \vert \int_0^{+\infty}
        e^{-t}\PD_{t}\GD_{x}L \d t \right \vert \d \sigma (x)} \\
    \nonumber &\le & \esp{\mu _{\sigma }}{\int_{\Lambda} \int_0^{+\infty}
      e^{-t}\PD_{t}|\GD_{x}L| \d t \d \sigma(x)} \\
    \nonumber & = & \esp{\mu _{\sigma }}{\int_{\Lambda} \int_0^{+\infty}
      e^{-t}|\GD_{x}L| \d t \d \sigma(x)} \\
    \nonumber &= & \esp{\mu _{\sigma }}{ \int_{\Lambda} | \GD_x L | \d\sigma(x)},
  \end{eqnarray}
  where we have used Jensen's inequality and the invariance property
  of the Poisson measure $\mu _{\sigma }$ with respect to the
  semi-group $\PD_t$. The desired inequality (\ref{eq:14b}) is thus established.
\paragraph{}Finally, the case of the trivial distance $\rho _0$ is similar since the couple $(\GD , \rho _0)$ also satisfies the Rademacher property, cf. Lemma~\ref{lem:lip_discrete}. The proof is achieved in full generality.
\end{proof}
\paragraph{}Actually, the well-known relationship between semi-group and generator
states that for any $G\in L^2 (\mu _{\sigma } )$,
$$
\int_{0}^{+\infty}e^{-t}\PD_{t} G \d t=(\id + \LD)^{-1}G .
$$
Applying then such an identity in the inequality (\ref{eq:first_T1}) above gives the following bound:
\begin{equation}
  \label{eq:14t}
  \T_{\rho _1}(\mu _{\sigma }, \nu)\le \esp{\mu _{\sigma }}{\int_\Lambda |(\id
    +\LD)^{-1}\GD_x L|\d\sigma(x)} .
\end{equation}
It seems theoretically slightly better than the upper bound of
Theorem~\ref{T:E1} but often yields to intractable computations,
except when the chaos representation of $L$ is given, as noticed in
Section~\ref{sec:dist-betw-poiss} below. Note that the very analog of
\eqref{eq:14t} on Wiener space was proved by a different though
related way in Theorem~3.2 of \cite{MR2036490}. 
\paragraph{}
Let us provide another method leading to Theorem~\ref{T:E1} which is based on the so-called Clark formula. Instead of considering configurations in $\Gamma _\Lambda$, the idea is to use multivariate Poisson processes, i.e., point processes on $[0,1]$ with marks in the $\sigma$-compact metric space $\Lambda$. Borrowing an idea of \cite{Wu:2000lr}, we first explain how to embed a Poisson process into a  multivariate Poisson process.
\paragraph{}Let $\hmu$ be the Poisson measure of intensity $\lambda \otimes
\sigma$ on the new configuration space $\Gamma _{\hX}$, where the enlarged state space is $\hX = [0,1] \times \Lambda$, and $\lambda$ denotes the Lebesgue measure on $[0,1]$. Any generic element $\homega\in \Gamma _{\hX}$ has the form $\homega = \sum
_{(t,x) \in \homega} \ee_{t,x}$. The canonical filtration is defined for any $t\in [0,1]$ as
\begin{equation*}
  \F{t} = \sigma \left \{ \homega([0, s]\times B), \, \, 0\le s \le t, \, \,  B\in \B(\Lambda)\right \}.
\end{equation*}
Let us recall the Clark formula, cf. for instance \cite{MR960543} or
Lemma~1.3 in \cite{Wu:2000lr}, which states that every functional $G
:\Gamma _{\hX} \to \R$ belonging to $\Dom \GD$ might be written as
\begin{equation}
  \label{eq:clark}
  G = \esp{\hmu}{G} + \int_0^1 \int_\Lambda \esp{\hmu}{\GD _{t,x} G \, | \, \F{t^-}} \d ( \homega-\lambda \otimes \sigma )(t,x) ,
\end{equation}
where $\GD _{t,x}$ denotes the discrete gradient on the enlarged configuration space $\Gamma _{\hX}$.
\paragraph{}For an element $\homega\in \Gamma _{\hX}$, we define by $\pi \homega$
its projection on $\Gamma_\Lambda$, i.e.,
\begin{equation*}
  \pi \homega (B) = \homega ([0,1] \times B), \quad B \in \B (\Lambda ),
\end{equation*}
and given $F : \Gamma _\Lambda \to \R$, we define the functional $\hF$
as
\begin{align*}
  \hF : \Gamma_{\hX} & \longrightarrow \R \\
  \homega & \longmapsto F(\pi \homega) .
\end{align*}
In particular, we have clearly $\GD _{t,x} \hF (\homega) = \GD _x F (\pi \homega)$ for any $(t,x) \in
\hX$. Moreover, we have $\mathbb{E} _{\hmu } [\hF ] = \esp{\mu _{\sigma }}{F}$ since the image measure of $\hmu$ by $\pi$ is $\mu _{\sigma }$.
\paragraph{}The total variation distance on $\Gamma _{\hX}$ is defined as
$$
\hd2 (\homega ,\heta )= \sum _{(t,x)\in \hX} \vert \homega (\{ t,x \})-\heta (\{ t,x \} )\vert .
$$
The key point is the following lemma.
\begin{lemma}
  \label{lem:prolongement}
  For any $F \in \rho_1 -\Lip_{1}$, the functional $\hF$ belongs to
  $\hd2 -\Lip _1$.
\end{lemma}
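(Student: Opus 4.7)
The plan is to reduce the assertion to the single combinatorial inequality
\[
\rho_1(\pi\homega,\pi\heta)\le\hd2(\homega,\heta),
\]
valid $\hmu\otimes\hmu$-almost everywhere on $\Gamma_{\hX}\times\Gamma_{\hX}$. Once this is in hand, the Lipschitz property of $\hF$ with constant $1$ under $\hd2$ follows immediately by writing
\[
|\hF(\homega)-\hF(\heta)|=|F(\pi\homega)-F(\pi\heta)|\le\rho_1(\pi\homega,\pi\heta)\le\hd2(\homega,\heta),
\]
while boundedness is inherited from $F$ through the factorization $\hF=F\circ\pi$.

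To prove the projection inequality, I would first use the diffuseness of $\sigma$ to restrict attention to the full-measure event where both $\pi\homega$ and $\pi\heta$ are simple configurations on $\Lambda$, i.e.\ where the map $(t,x)\mapsto x$ is injective on the atoms of $\homega$ (respectively of $\heta$). On this event I would analyse how the projection $\pi$ acts on the symmetric difference: for each $(t,x)\in\homega\setminus\heta$, either $x\notin\pi\heta$, in which case $x$ contributes an atom to $\pi\homega\setminus\pi\heta$ whose only preimage in $\homega\Delta\heta$ is $(t,x)$; or there is a unique $(t',x)\in\heta$ with $t'\neq t$, which by injectivity of the projection on $\homega$ cannot lie in $\homega$, so $(t',x)\in\heta\setminus\homega$ as well, and $x\in\pi\homega\cap\pi\heta$ cancels out in the projected symmetric difference. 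Running the symmetric analysis for atoms of $\heta\setminus\homega$ shows that $(t,x)\mapsto x$ induces a well-defined injection from $\pi\homega\Delta\pi\heta$ into $\homega\Delta\heta$, yielding the cardinality bound and hence the desired inequality.

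I do not anticipate any real obstacle: the argument is essentially a bookkeeping exercise on the symmetric difference, leveraging the one-to-one character of $\pi$ on atoms that is guaranteed by the diffuseness of the intensity. The only minor subtlety is the $\hmu\otimes\hmu$-null set on which $\pi\homega$ or $\pi\heta$ fails to be simple; there one may either extend $F$ to multi-configurations by any convenient convention or simply ignore these configurations, since the Lipschitz property of $\hF$ will ultimately be used in the subsequent Clark-formula argument only through integrals with respect to $\hmu$.
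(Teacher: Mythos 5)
Your argument is correct, but it takes a genuinely different route from the paper's. The paper proves the key inequality $\rho_1(\pi\homega,\pi\heta)\le\hd2(\homega,\heta)$ in two lines, with no genericity assumption: writing $\pi\homega(\{x\})=\sum_{t\in[0,1]}\homega(\{t,x\})$, it applies the triangle inequality $\bigl|\sum_t a_t\bigr|\le\sum_t|a_t|$ for each fixed $x$ and then sums over $x$. This holds for \emph{every} pair $(\homega,\heta)$, so it proves the lemma exactly as stated ("for any $F$... the functional $\hF$ belongs to $\hd2-\Lip_1$", i.e.\ a pointwise Lipschitz bound on all of $\Gamma_{\hX}\times\Gamma_{\hX}$). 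Your symmetric-difference bookkeeping — building an injection from $\pi\homega\Delta\pi\heta\,\cup\,\pi\heta\Delta\pi\homega$ into $\homega\Delta\heta\,\cup\,\heta\Delta\homega$ — is sound where it applies, but it needs the projections to be simple, hence only yields the inequality off a $\hmu\otimes\hmu$-null set; you correctly flag this and correctly observe that it suffices for the downstream Clark-formula estimate, but it is formally weaker than the lemma's universal statement, and the extra machinery (diffuseness of $\sigma$, the case analysis on cancelling atoms) buys nothing that the summation-swap does not already give. On the credit side, your attention to whether $\pi\homega$ is simple touches a point the paper silently glosses over, namely that $F(\pi\homega)$ is only literally defined when $\pi\homega$ is a genuine element of $\Gamma_\Lambda$; the paper's counting-measure definition of $\rho_1$ accommodates multiplicities, but $F$ itself is only given on simple configurations.
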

\begin{proof}
  Given $F \in \rho_1 -\Lip_{1}$, we have for any $\homega , \heta \in
  \Gamma _{\hX}$:
  \begin{eqnarray*}
    \label{eq:3}
    \vert \hF(\homega)-\hF(\heta) \vert & = & \vert F(\pi\homega)-F(\pi\heta) \vert \\
    & \leq & \rho _1 (\pi \homega, \pi \heta) \\
    & = & \sum _{x\in \Lambda } \left \vert \pi \homega (\{ x \})-\pi \heta (\{ x \} ) \right \vert  \\
    & = & \sum _{x\in \Lambda } \left \vert \sum _{t\in [0,1]} \homega (\{ t,x \})- \heta (\{ t,x \}) \right \vert  \\
    & \leq & \sum _{(t,x)\in \hX} \vert \homega (\{ t,x \})-\heta (\{ t,x \} )\vert  \\
    & = & \hd2 (\homega ,\heta ) .
  \end{eqnarray*}
  The proof is complete.
\end{proof}
\paragraph{}Now we are able to give a second proof of Theorem~\ref{T:E1} by means
of the Clark formula (\ref{eq:clark}) and Lemma~\ref{lem:prolongement}.
\begin{proof}
  Letting $\hnu$ be the measure with density $\hL$ with respect to
  $\hmu$, we obtain:
  \begin{align*}
    \T_{\rho _1}(\mu_\sigma,\nu)&=\sup_{F\in \rho _1 -\Lip_{1}} \esp{\mu_\sigma}{F(L-1)} \\
    &=\sup_{F\in \rho _1 -\Lip_{1}} \mathbb{E} _{\hmu} [\hF (\hL-1)] \\
    &=\sup_{F \in \rho _1 -\Lip_{1}} \mathbb{E} _{\hnu} [\hF ] - \mathbb{E} _{\hmu} [\hF ] .
  \end{align*}
  Now using the Clark formula~(\ref{eq:clark}) and taking expectation
  with respect to $\hnu$,
  \begin{align*}
    \mathbb{E} _{\hnu}[\hF] &= \mathbb{E} _{\hmu}[\hF] +
    \esp{\hnu}{\int_0^1\int_\Lambda \esp{\hmu}{\GD _{t,x}\hF\, |\,
        \F{t^-}}\d ( \homega-\lambda \otimes \sigma )(t,x)}\\
    &= \mathbb{E} _{\hmu}[\hF] + \esp{\hmu}{\hL \int_0^1\int_\Lambda
      \esp{\hmu}{\GD _{t,x}\hF\, |\,
        \F{t^-}}\d ( \homega-\lambda \otimes \sigma )(t,x)}\\
    &= \mathbb{E} _{\hmu}[\hF] + \esp{\hmu}{\int_0^1\int_\Lambda \esp{\hmu}{\GD
        _{t,x}\hF\, |\, \F{t^-}}\GD _{t,x}\hL \d t \d \sigma(x)},
  \end{align*}
  where in the second line we also used the Clark formula~(\ref{eq:clark}) applied to the functional $\hL$. By Lemma~\ref{lem:lip_discrete}, the couple $(\GD , \hd2)$ satisfies the Rademacher property (\ref{eq:rademacher}) on $\Gamma _{\hX}$. Hence Lemma~\ref{lem:prolongement} implies that for $F\in \rho _1 -\Lip_1$, the quantity $ \left \vert \esp{\hmu}{\GD_{t,x}\hF\, | \, \F{t^-}} \right \vert$ is bounded by $1$, $\hmu \otimes \lambda \otimes \sigma$-a.e., so that we obtain finally
  \begin{eqnarray*}
    \T_{\rho _1}(\mu _{\sigma } ,\nu) & \le &\esp{\hmu}{\int_0^1 \int_\Lambda |\GD_{t,x}\hL| \d t \d \sigma(x)} \\
    & = & \esp{\mu_\sigma}{\int_\Lambda |\GD_{x}L| \d \sigma(x)}.
  \end{eqnarray*}
  The second proof of Theorem~\ref{T:E1} is thus complete.
\end{proof}
\subsection{A qualitative upper bound on $\T_{\rho _2}$ by time-change}
\label{sec:time_change}
Recall that by Lemma~\ref{lem:lip_wasserstein}, the couple $(\GC , \rho _2)$ satisfies the Rademacher property (\ref{eq:rademacher}). Hence Proposition~\ref{T:E0} entails an upper bound on the $\T_{\rho _2}$ Rubinstein distance as follows: if $L$ denotes the density of an absolutely continuous probability measure $\nu$ with respect to $\mu_\sigma$, then we have
\begin{eqnarray*}
\T_{\rho _2}(\mu_\sigma, \nu) & \leq & \int _{\Gamma _\Lambda } \int_\Lambda \left \vert \int_0^{+\infty} \GC _x \PC _t L (\omega ) \d t \right \vert \d \omega (x) \, \d \mu _{\sigma } (\omega ) ,
\end{eqnarray*}
provided the inequality makes sense. However, despite its theoretical interest, such an inequality is not really tractable in practise, since no commutation relation has been established yet between the differential gradient $\GC$ and the semi-group $\PC _t$. Hence the purpose of this section is to provide another estimate on $\T_{\rho _2}$ through a different approach relying on a time-change argument together with the Girsanov Theorem.
\paragraph{}We consider the notation of Section~\ref{sec:rho_1-rubinst-dist} above, with the difference that the state space is now $\hX := [0, \infty) \times \Lambda $, where $\Lambda$ is the space $\R ^k$ equipped with the Euclidean distance $\kappa$. In this part, the distance of interest on the enlarged configuration space $\Gamma _{\hX}$ is the Wasserstein distance:
$$
\hrho (\homega , \heta )^2= \inf _{\beta\in \Sigma({\homega , \heta
  })} \, \int _{\hX} \int _{\hX} (\kappa (x,y) ^2 + \vert t-s\vert ^2
) \d \beta ((s,x),(t,y)).
$$
The following theorem is our second main result.
\begin{theorem}
  \label{thm:time_change}
  Let $L$ be the (positive) density of an absolutely continuous probability measure $\hnu$ with
  respect to $\hmu $. Then provided the inequality makes sense, we get the following upper bound on the Rubinstein distance $\T_{\hrho}(\hmu , \hnu)$:
  \begin{equation}\label{eq:6}
    \begin{split}
    \T_{\hrho}(\hmu , \hnu)^2 & \le \esp{\hmu }{L \int _\Lambda \int
      _0 ^{+\infty} \left \vert \int _0 ^t u(s,z) \d  s \right \vert
      ^2 (1+u(t,z) ) \d t \d \sigma (z)} \\
&= \esp{\hmu }{L\int _\Lambda \int _0 ^{+\infty} \left \vert r-v^{-1}(r,z) \right \vert ^2 \d r \d \sigma (z)}    ,
    \end{split}
    \end{equation}
where $u(t,z) >-1$ is the following predictable process:
$$
u(t,z) = \frac{\esp{}{\GD _{t,z} L | \F{t^-}}}{\esp{}{L | \F{t^-}}},\quad v (t,z) := t+\int _0 ^t u(s,z) \d s , \quad z\in \Lambda ,
$$
and $v^{-1}(\cdot , z)$ is the inverse of the increasing mapping $t\mapsto v(t,z)$.
\end{theorem}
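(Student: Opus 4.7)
The strategy is to build an explicit coupling between $\hmu$ and $\hnu$ on $\Gamma_{\hX}$ via a predictable time change of the Poisson clock, and then to estimate the Wasserstein cost of that coupling.

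First, I identify $u$ as a Girsanov kernel. Applying the Clark formula~\eqref{eq:clark} to the positive density $L$ expresses $L$ as a stochastic exponential driven by $\homega-\lambda\otimes\sigma$; interpreting this as a Dol\'eans--Dade exponential of the predictable integrand $u$, classical martingale theory for marked point processes gives that under $\hnu=L\cdot\hmu$ the random measure $\homega$ is still a marked point process on $\hX$, but its $(\F{t})$-predictable compensator is now $(1+u(t,z))\,dt\,d\sigma(z)$. Positivity of $L$ is what forces $u>-1$ so that this compensator is a bona fide positive random measure.

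Next, for each $z\in\Lambda$ the map $t\mapsto v(t,z)=t+\int_0^t u(s,z)\,ds$ is $(\F{t})$-predictable, continuous, strictly increasing from $0$ to $+\infty$. Set
\begin{equation*}
\heta := \sum_{(t,z)\in\homega}\ee_{v(t,z),\,z},
\end{equation*}
i.e.\ apply the time change $v(\cdot,z)$ separately on each $z$-fibre. By the classical time-change theorem for marked point processes, this transformation absorbs the factor $(1+u(t,z))$ from the compensator, so under $\hnu$ the compensator of $\heta$ reduces to $dr\,d\sigma(z)$. Hence $\heta$ has law $\hmu$, and the joint law of $(\heta,\homega)$ under $\hnu$ belongs to $\Sigma(\hmu,\hnu)$.

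The point-to-point transport plan matching each $(t,z)\in\homega$ with $(v(t,z),z)\in\heta$ has zero spatial displacement, so
\begin{equation*}
\hrho(\heta,\homega)^2 \leq \sum_{(t,z)\in\homega}|v(t,z)-t|^2 = \sum_{(t,z)\in\homega}\left|\int_0^t u(s,z)\,ds\right|^2.
\end{equation*}
Combining $\T_{\hrho}(\hmu,\hnu)\leq\esp{\hnu}{\hrho(\heta,\homega)}$ with Jensen's inequality yields $\T_{\hrho}(\hmu,\hnu)^2\leq\esp{\hnu}{\hrho(\heta,\homega)^2}$, after which the compensation formula for $\homega$ under $\hnu$ together with the identity $\esp{\hnu}{\,\cdot\,}=\esp{\hmu}{L\,\cdot\,}$ gives the first form of~\eqref{eq:6}. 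The second form follows from the deterministic fibrewise change of variables $r=v(t,z)$, $dr=(1+u(t,z))\,dt$, under which $v(t,z)-t=r-v^{-1}(r,z)$. The real technical work lies in the first two paragraphs: rigorously justifying the Girsanov identification of $u$ and the time-change theorem requires enough integrability of $u$ (and of $v^{-1}$), which is exactly what is hidden behind the ``provided the inequality makes sense'' hedge of the statement.
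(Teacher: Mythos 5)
Your proposal is correct and follows essentially the same route as the paper: identification of $u$ by matching the Girsanov/Dol\'eans--Dade representation of $L_t$ with its Clark-formula representation, the fibrewise time change $\tau\homega=\sum_{(t,z)\in\homega}\ee_{v(t,z),z}$ whose law under $\hnu$ is $\hmu$ (the paper cites Theorem~3 of \cite{these_decreusefond} for this), the pointwise cost bound with zero spatial displacement, Cauchy--Schwarz/Jensen, the compensation formula, and the change of variables $r=v(t,z)$. No substantive differences.
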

\paragraph{}
Note that for $z\in \Lambda $ fixed, the term $ \int _0 ^{+\infty} \left \vert r-v^{-1}(r,\,z) \right \vert ^2 \d r$ can be interpreted as a generalized Wassertein distance between the infinite measures $\d r$ and $(1+u(r,z))\d r$, see \cite{Villani:2007fk}. Then, the $\T _{\hrho}$ distance is bounded from above by the expectation under $\hnu$ of this generalized distance integrated over $\Lambda$ according to the marks distribution.
\begin{proof}
By the Girsanov Theorem, there exists a predictable process $u$ such that for any compact set $K\in \B(\Lambda)$, the process
\begin{equation*}
t\mapsto \homega ([0, t ]\times K)-\int_0^t \int_K (1+u(s, z)) \d s \d \sigma (z) ,
\end{equation*}
is a $\hnu$-martingale. Moreover, the conditional expectation $L_t:= \esp{}{L | \F{t}}$ might be identified as follows:
\begin{eqnarray*}
L_t & = & \exp \left \{\int _0 ^t \int_\Lambda \ln (1+u(s,z)) \d \homega (s,z) -\int _0 ^t \int _\Lambda u(s,z) \d s \d \sigma (z) \right \} \\
& = & \mathcal{E}\left(\int _0 ^t \int_\Lambda  u(s, z) \d (\homega - \lambda \otimes \sigma )(s,z) \right) \\
& = & 1+\int_0^t \int_\Lambda L_{s^-} u(s,z) \d (\homega - \lambda \otimes \sigma )(s,z),
\end{eqnarray*}
where $\mathcal{E}$ denotes the classical Dol\'eans-Dade exponential. On the other hand, the Clark formula~(\ref{eq:clark}) extended to the set $(0,+\infty)$ induces that
\begin{equation*}
L_t = 1+\int_0^t \int_\Lambda \mathbb{E} \left[ \GD_{s,z} L_t| \F{s^-} \right] 
\d (\homega - \lambda \otimes \sigma )(s,z).
\end{equation*}
By identification, we obtain:
\begin{equation*} 
u(s,z) = \frac{\mathbb{E} \left[ \GD_{s,z} L_t| \F{s^-} \right]}{L_{s^-}} 
= \frac{\mathbb{E} \left[ \GD_{s,z} L| \F{s^-} \right]}{L_{s^-}}, 
\end{equation*}
since for any $s\in (0,t)$ a commutation relation holds between the discrete grad\-ient $\GD _{s,z}$ and the conditional expectation knowing $\F{t}$, cf.~for instance Lemma~3.2 in \cite{nualart88_1}. Define on $\Gamma_{\hX}$ the time-change configuration $\tau \homega$ by
$$
\tau \homega = \sum _{(t_i,z_i) \in \homega } \ee _{v(t_i, z_i), z_i},
$$
where $v(t,z)$ is given above. By Theorem~3 in \cite{these_decreusefond}, the distribution of $\tau \homega$ under $\hnu$ is nothing but the law of the configuration $\homega$ under $\hmu$.~Hence using Cauchy-Schwarz' inequality in the second line below, we obtain:
\begin{eqnarray*}
\T_{\hrho }(\hmu , \hnu) & \leq & \esp{\hnu }{\hrho (\homega , \tau \homega )} \\
& \leq & \esp{\hnu }{\int _\Lambda \int _0 ^{+\infty} \vert t - v(t,z) \vert ^2 \d \homega (t,z)}^{1/2} \\
& = & \esp{\hnu }{ \int _\Lambda \int _0 ^{+\infty} \left \vert t-v(t,z) \right \vert ^2 \frac{\d v}{\d t}(t,z) \d t \d \sigma (z)}^{1/2} ,
\end{eqnarray*}
where we used the classical compensation formula for stochastic integrals with respect to Poisson random measures. Finally, the change of variable $r=v(t,z)$ for $z\in \Lambda $ being fixed allows us to obtain the desired inequality (\ref{eq:6}).
\end{proof}
\section{Applications}
\label{sec:applications}
\subsection{Distance estimates between processes}
\label{sec:dist-betw-poiss}
The purpose of the present part is to apply our main results Theorems~\ref{T:E1} and \ref{thm:time_change} to provide distance estimates between a Poisson process and several other more sophisticated processes, such as Cox or Gibbs processes. See for instance the pioneer monograph \cite{MR93g:60043} or also \cite{MR1708412,Schuhmacher:2005lx} for similar results with respect to another (bounded) distances on the configuration space $\Gamma _\Lambda$. The three first examples below rely on the total variation distance $\rho _1$, whereas in the last one the Wasserstein distance $\rho _2$ is considered. 
\paragraph{} {\bf \textit{Poisson processes.}} Here the probability measure $\nu$ is supposed to be another Poisson measure on $\Gamma_\Lambda$, where $\Lambda$ is a $\sigma$-compact metric space.
\begin{proposition}
  \label{thm:dist-betw-poiss}
  Let $\mu_\tau$ be a Poisson measure on $\Gamma_\Lambda$ of intensity
  $\tau$. We assume that $\tau$ admits a density $p$ with
  respect to $\sigma$ such that $p-1 \in L^1 (\sigma)$. Then we have
  \begin{equation}
  \label{eq:poisson_process}
    \T_{\rho_1}(\mu_\sigma, \mu_\tau)\le \int _\Lambda |p(x)-1| \d  \sigma(x).
  \end{equation}
\end{proposition}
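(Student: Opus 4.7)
The plan is to apply Theorem~\ref{T:E1} directly, with $\nu = \mu_\tau$, and reduce the right-hand side to the $L^1(\sigma)$-norm of $p-1$ using the explicit form of the Radon--Nikodym derivative of one Poisson measure with respect to another.

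First I would identify the density $L = d\mu_\tau / d\mu_\sigma$. Since $\tau = p \, d\sigma$ is absolutely continuous with respect to $\sigma$, a standard Girsanov-type computation (check it on exponential test functionals $\exp(\int f \, d\omega)$ using the characteristic functional of a Poisson measure given in Section~\ref{sec:preliminaries}) yields
\begin{equation*}
L(\omega) = \exp\!\left(-\int_\Lambda (p(x)-1) \, d\sigma(x)\right) \prod_{x \in \omega} p(x).
\end{equation*}
The integrability condition $p - 1 \in L^1(\sigma)$ guarantees that the normalizing constant is well-defined and that $\mathbb{E}_{\mu_\sigma}[L] = 1$.

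Next I would compute the discrete gradient. From the product structure of $L$, adding a point $x$ to $\omega$ multiplies the product by $p(x)$, so
\begin{equation*}
\GD_x L(\omega) = L(\omega + \varepsilon_x) - L(\omega) = L(\omega)\,(p(x) - 1),
\end{equation*}
and therefore $|\GD_x L(\omega)| = L(\omega)\,|p(x) - 1|$. Substituting into the bound of Theorem~\ref{T:E1} and applying Fubini gives
\begin{equation*}
\T_{\rho_1}(\mu_\sigma, \mu_\tau) \le \esp{\mu_\sigma}{\int_\Lambda L(\omega)\,|p(x)-1| \, d\sigma(x)} = \int_\Lambda |p(x)-1| \, d\sigma(x),
\end{equation*}
where the last step uses $\mathbb{E}_{\mu_\sigma}[L] = 1$.

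The main obstacle is verifying the technical hypotheses of Theorem~\ref{T:E1}, namely $L \in \Dom \GD$ and $\GD L \in L^1(\mu_\sigma \otimes \sigma)$, since the assumption $p - 1 \in L^1(\sigma)$ alone does not immediately give $L \in L^2(\mu_\sigma)$. I would handle this by a truncation/approximation argument: replace $p$ by $p_n := (p \wedge n) \vee (1/n)$ on a compact exhaustion of $\Lambda$ and set $p_n = 1$ elsewhere, so that $p_n - 1$ has compact support and is bounded, whence $L_n$ belongs to $\Dom \GD$ and all $L^q$ spaces, and Theorem~\ref{T:E1} applies to give the bound with $p_n$ in place of $p$. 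By dominated convergence $\int |p_n - 1| \, d\sigma \to \int |p - 1| \, d\sigma$, while the lower semi-continuity of $\sigma \mapsto \T_{\rho_1}(\mu_\sigma, \mu_{\tau_n})$ noted after Lemma~\ref{lem:lsc}, together with the vague convergence $\tau_n \to \tau$, yields the inequality in the limit.
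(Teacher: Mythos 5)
Your proposal is correct and follows essentially the same route as the paper: write down the explicit Radon--Nikodym density $L$, observe that $\GD_x L = L\,(p(x)-1)$, and conclude from Theorem~\ref{T:E1} using $\esp{\mu_\sigma}{L}=1$. The truncation argument you add to verify $L\in\Dom\GD$ and $\GD L\in L^1(\mu_\sigma\otimes\sigma)$ is extra care the paper does not spell out, and it is sound (note only that the relevant lower semi-continuity is in the second argument, via weak convergence $\mu_{\tau_n}\to\mu_\tau$, which your setup does provide).
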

\begin{proof}
  Since $\mu_\tau$ is a Poisson measure on $\Gamma_\Lambda$ of
  intensity $\tau$, it is well known that it is absolutely continuous
  with respect to $\mu_\sigma$ and the density $L$ is given by
  $$
    L(\omega )= \exp \left \{ \int _\Lambda \log p(x) \d\omega (x) +\int _\Lambda (1-p(x))\d\sigma (x) \right \}.
  $$
  It is then straightforward that $\GD_x L = L ( p(x)-1)$, hence by
  Theorem~\ref{T:E1},
$$
\T_{\rho_1}(\mu_\sigma, \mu_\tau)\le \esp{\mu _\sigma }{L\int _\Lambda
  |p(x)-1|\d\sigma(x)}=\int _\Lambda |p(x)-1|\d\sigma(x).
$$
The proof is achieved.
\end{proof}
\paragraph{}Note that in this very simple situation, the inequality (\ref{eq:14t})
yields the same bound. Indeed, since $p$ is deterministic, the
density $L$ has the following chaos representation
\begin{equation*}
  L=1+\sum_{n=1}^{\infty} \frac{1}{n!}J_n \left( (p-1)^{\otimes n}\right) ,
\end{equation*}
cf. identity (7) in \cite{ruiz85}, so that we have
\begin{equation*}
  ((\id +\LD)^{-1}\GD_x L=(p(x)-1) \sum_{n=1}^{\infty} \frac{1}{(n-1)!} J_{n-1}  \left( (p-1)^{\otimes n-1}\right) = (p(x)-1)L.
\end{equation*}
Actually, one might obtain the inequality (\ref{eq:poisson_process}) by using another very intuitive approach. Indeed, let $\omega _0$, $\omega _1$ and $\omega _2$ be three independent configurations in $\Gamma _\Lambda$ with respective intensities
$$
\d \sigma _0  := (p \wedge 1 ) \d \sigma , \quad \sigma _1 := \sigma - \sigma _0 , \quad \sigma _2 := \tau - \sigma _0 .
$$
Then $\omega _0 + \omega _1$ and $\omega _0 + \omega _2$ have respective distribution $\mu _\sigma$ and $\mu _\tau$. Hence we have
\begin{eqnarray*}
\T_{\rho_1}(\mu_\sigma, \mu_\tau) & = & \inf \left \{ \esp{}{\rho _1 (\omega , \bar{\omega})} : \omega \sim \mu_\sigma , \, \bar{\omega}\sim \mu_\tau \right \} \\
& \leq & \esp{}{\rho _1 (\omega _0 + \omega _1, \omega _0 + \omega _2 )} \\
& = & \esp{}{(\omega _1 + \omega _2 )(\Lambda )} \\
& = & \int _\Lambda |p(x)-1|\d\sigma(x).
\end{eqnarray*}
\paragraph{}{\bf \textit{Cox processes.}}
A Cox process is a Poisson process with a random intensity. To
construct a Cox process, we need to enlarge our probability
space. Recall that $\M(\Lambda)$ is the space of positive and diffuse Radon measures on
$\Lambda$ endowed with the vague topology and the corresponding Borel
$\sigma$-field. Given an arbitrary probability measure $\P _M$ on $\M(\Lambda)$, we denote by $M$ the canonical random variable on
$(\M(\Lambda), \P _M)$, i.e. $M$ given by $M(m)=m$ has distribution $\P _M$. On the space
$\Gamma_\Lambda\times \M(\Lambda)$, we consider the probability measures
$$
\d \mu_M^\prime(\omega, m) := \d\mu_m(\omega)\d\P _M(m) \quad \mbox{and} \quad \d\mu_\sigma^\prime(\omega, m) := \d\mu_\sigma(\omega)\d\P_M(m) .
$$
Note that the second one is the distribution of the independent couple $(N,M)$, where $N$ is the canonical random variable on $\Gamma _\Lambda$ with distribution $\mu _\sigma$.
\paragraph{}As noticed in Section~\ref{sec:distances}, the application $m\mapsto \T_{\rho_1}(\mu_m , \mu_\sigma)$ is lower semi-con\-ti\-nuous, hence
measurable.  The distribution $\mu_M^\prime$ on $\Gamma_\Lambda$ is
said to be Cox whenever for any function $f\in {\mathcal C}_0 (\Lambda )$,
\begin{equation*}
  \esp{\mu_M^\prime}{\exp\left( \int_\Lambda f\d\omega \right) \, \biggl|\,
    M}=\exp\left\{\int_\Lambda (e^f-1)\d M\right\}.
\end{equation*}
In the definition of the distance between $\mu_M^\prime$ and $\mu^\prime
_\sigma $, we do not include any information on $M$, so that the distance $\rho _1$ remains the same and we have:
\begin{eqnarray*}
  \T_{\rho_1}(\mu^\prime_\sigma, \mu_M^\prime)& =&\sup_{F\in
    \rho_1-\Lip _1} \int _{\Gamma_\Lambda\times \M(\Lambda) } F(\omega) \d \mu_\sigma ^\prime (\omega,
  m)-\int _{\Gamma_\Lambda\times \M(\Lambda) } F(\omega) \d \mu_M ^\prime(\omega, m) \\
  & = & \sup_{F\in \rho_1-\Lip _1} \int_{\M(\Lambda)} \left(\int _{\Gamma _\Lambda } F (\omega)
    \d (\mu_\sigma - \mu_m )(\omega)   \right)\d\P_M(m).
\end{eqnarray*}
\begin{proposition}
  Assume that $\mu_\sigma^\prime$-a.s., the measure $M$ is absolutely
  continuous with respect to~$\sigma$ and that there exists a
  measurable version of $\d   M / \d  \sigma$ and such that $\d   M / \d  \sigma - 1 \in L^1 (\mu _\sigma^\prime  \otimes \sigma )$. Then we have
  \begin{equation*}
    \T_{\rho_1}(\mu_\sigma^\prime, \mu_M^\prime)\le \esp{\mu _\sigma^\prime }{\int_\Lambda
      \left|\frac{\d   M}{\d   \sigma}(x)-1\right| \d  \sigma(x)}.
  \end{equation*}
\end{proposition}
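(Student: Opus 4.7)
The plan is to reduce the Cox setting to the plain Poisson setting (Proposition~\ref{thm:dist-betw-poiss}) by conditioning on $M$, starting from the dual formula displayed just before the proposition.

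First, I would pick any $F\in\rho_1\text{-}\Lip_1$ and apply the Kantorovich-Rubinstein duality \eqref{eq:9} pointwise in $m\in\M(\Lambda)$: for every such $m$,
\begin{equation*}
  \int_{\Gamma_\Lambda}F\d(\mu_\sigma-\mu_m)\le \T_{\rho_1}(\mu_\sigma,\mu_m).
\end{equation*}
The map $m\mapsto\T_{\rho_1}(\mu_\sigma,\mu_m)$ is measurable (this is the lower semi-continuity remark following Lemma~\ref{lem:lsc}), so I can integrate with respect to $\P_M$ and then take the supremum over $F$, obtaining
\begin{equation*}
  \T_{\rho_1}(\mu'_\sigma,\mu'_M)\le \int_{\M(\Lambda)}\T_{\rho_1}(\mu_\sigma,\mu_m)\d\P_M(m).
\end{equation*}

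Second, I would apply Proposition~\ref{thm:dist-betw-poiss} inside the integral. The standing hypothesis $\d M/\d\sigma-1\in L^1(\mu'_\sigma\otimes\sigma)$ combined with Fubini gives that $\P_M$-a.s., the density $p_m:=\d m/\d\sigma$ is well defined and satisfies $p_m-1\in L^1(\sigma)$, so that
\begin{equation*}
  \T_{\rho_1}(\mu_\sigma,\mu_m)\le \int_\Lambda |p_m(x)-1|\d\sigma(x),\qquad \P_M\text{-a.s.}
\end{equation*}
Plugging this in and noting that the resulting integrand depends on $m$ only (not on $\omega$), I would rewrite the expectation against $\P_M$ as the expectation against the product measure $\mu'_\sigma=\mu_\sigma\otimes\P_M$, which yields exactly the claimed bound.

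The only delicate point is the measurability and the almost-sure applicability of Proposition~\ref{thm:dist-betw-poiss}; both are immediate given the framework already set up (lower semi-continuity of $\sigma\mapsto\T_{\rho_1}(\mu_\sigma,\nu)$ and Fubini on the $L^1$ hypothesis). As an alternative route, one could instead apply Theorem~\ref{T:E1} directly on the enlarged space $\Gamma_\Lambda\times\M(\Lambda)$ with the density $L(\omega,m)=\exp\bigl\{\int_\Lambda\log p_m\d\omega+\int_\Lambda(1-p_m)\d\sigma\bigr\}$ of $\mu'_M$ with respect to $\mu'_\sigma$, for which $\GD_x L=L\cdot(p_M(x)-1)$; this leads to the same bound after absorbing the density $L$ into a change of measure from $\mu'_\sigma$ to $\mu'_M$ and using that the integrand is $\omega$-independent.
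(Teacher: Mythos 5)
Your argument is correct and coincides with the paper's own proof: both bound $\T_{\rho_1}(\mu'_\sigma,\mu'_M)$ by interchanging the supremum over $F\in\rho_1\text{-}\Lip_1$ with the integral over $\M(\Lambda)$, identify the inner quantity as $\T_{\rho_1}(\mu_\sigma,\mu_m)$, and then apply Proposition~\ref{thm:dist-betw-poiss} $\P_M$-almost surely. Your remarks on measurability and the Fubini reduction of the $L^1$ hypothesis only make explicit what the paper leaves implicit.
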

\begin{proof}
We have:
  \begin{align*}
    \T_{\rho_1}(\mu^\prime_\sigma, \mu_M^\prime) &\le
    \int_{\M(\Lambda)} \sup_{F\in \rho_1 - \Lip _1} \left(\int
      _{\Gamma _\Lambda }F(\omega) \d (\mu_\sigma - \mu_m )(\omega)
    \right) \d\P_M(m)\\
    &= \int_{\M(\Lambda)} \T_{\rho_1}(\mu_\sigma, \mu_m) \d\P_M(m)\\
    &\le \int_{\M(\Lambda)} \int_\Lambda \left| \frac{\d m}{\d
        \sigma}(x)-1\right|\d\sigma(x) \d\P_M(m),
  \end{align*}
  where the last inequality follows from Proposition~\ref{thm:dist-betw-poiss}.
\end{proof}
\paragraph{} {\bf \textit{Gibbs processes.}}
Let $\Lambda = \R ^k$ and assume that the measure $\nu$ is a Gibbs
measure on $\Gamma_\Lambda$ with respect to the reference measure $\mu
_\sigma$, i.e. the density of $\nu$ with respect to $\mu_\sigma$ is of the form $L=e^{-V}$, where
$$
V(\omega) := \int _\Lambda \int _\Lambda \phi(x-y)\d\omega(x)\d\omega(y) < +\infty , \quad \mu_\sigma -a.s.,
$$
and where the potential $\phi : \Lambda \to (0,+\infty )$ is such that $\phi(x)=\phi(-x)$ and
$$
\int _\Lambda \int _\Lambda \phi(x-y)\d  \sigma(x)\d  \sigma(y) <+\infty .
$$
We have the following result.
\begin{proposition}
The Rubinstein distance $\T_{\rho_1}$ between the Poisson measure $\mu_\sigma$ and the Gibbs measure $\nu$ is bounded as follows:
  \begin{equation*}
    \T_{\rho _1}(\mu_\sigma, \nu)\le 2\, \int _\Lambda \int _\Lambda \phi(x-y)\d  \sigma(x)\d  \sigma(y) .
  \end{equation*}
\end{proposition}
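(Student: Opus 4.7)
The plan is to apply Theorem~\ref{T:E1}: it suffices to estimate $\esp{\mu_\sigma}{\int_\Lambda |\GD_x L|\,\d\sigma(x)}$ from above by $2\int_\Lambda\int_\Lambda \phi(x-y)\,\d\sigma(x)\,\d\sigma(y)$. The two main ingredients will be a pointwise bound on $\GD L$ coming from the monotone structure of $V$, and the Mecke formula for Poisson measures used to exchange an $\omega$-integration against a $\sigma$-integration.

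First I would compute the discrete gradient explicitly. Using the symmetry $\phi(x)=\phi(-x)$ and the natural interpretation of the double integral defining $V$ as excluding the diagonal (which is forced by the hypothesis $V(\omega)<+\infty$ $\mu_\sigma$-a.s., since a Poisson configuration has infinitely many atoms), one obtains
\[
W(x,\omega):=V(\omega+\ee_x)-V(\omega)=2\int_\Lambda \phi(x-y)\,\d\omega(y)\ge 0,
\]
and hence $\GD_x L(\omega)=L(\omega)\bigl(e^{-W(x,\omega)}-1\bigr)$. Since $W\ge 0$, the elementary inequality $|e^{-W}-1|=1-e^{-W}\le W$ yields
\[
|\GD_x L(\omega)|\le L(\omega)\,W(x,\omega)=2L(\omega)\int_\Lambda \phi(x-y)\,\d\omega(y).
\]

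Next I would take expectation under $\mu_\sigma$ and apply the Mecke formula with $f(y,\omega):=L(\omega)\,\phi(x-y)$, giving
\[
\esp{\mu_\sigma}{\int_\Lambda L(\omega)\,\phi(x-y)\,\d\omega(y)}=\int_\Lambda \phi(x-y)\,\esp{\mu_\sigma}{L(\omega+\ee_y)}\,\d\sigma(y).
\]
Because $V$ is increasing under the addition of a point, one has the pointwise inequality $L(\omega+\ee_y)\le L(\omega)$, so $\esp{\mu_\sigma}{L(\omega+\ee_y)}\le \esp{\mu_\sigma}{L}=1$. Integrating over $x\in\Lambda$ with respect to $\sigma$, Fubini's theorem then produces the announced bound, and Theorem~\ref{T:E1} concludes.

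The genuinely delicate step is the computation of $W$: one must be sure the diagonal term $\phi(0)$ does not appear, since otherwise its contribution $\phi(0)\sigma(\Lambda)$ to $\int W\,\d\sigma$ would destroy the bound. Once this point is settled by the finiteness hypothesis on $V$, everything else is a pointwise manipulation combined with a single application of Mecke's formula, so I expect no further obstacle.
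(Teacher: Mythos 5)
Your proposal is correct and follows essentially the same route as the paper: apply Theorem~\ref{T:E1}, compute $\GD_x L(\omega)=L(\omega)\bigl(e^{-2\int_\Lambda\phi(x-\cdot)\d\omega}-1\bigr)$ (with the diagonal excluded, exactly as the paper implicitly does), and use $1-e^{-u}\le u$. The only difference is the last step, where the paper simply bounds $L\le 1$ and applies the first-moment (Campbell) formula, while you keep $L$ and use Mecke's formula together with $L(\omega+\ee_y)\le L(\omega)$ and $\esp{\mu_\sigma}{L}=1$; both variants rely on the positivity of $\phi$ and yield the identical bound.
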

\begin{proof}
  Since $V$ is $\mu_\sigma$-a.s. finite, so does $\int _\Lambda \phi(x-y)\d\omega(y)$ for any $x$. We have:
  \begin{equation*}
    \GD_x L (\omega ) = - L (\omega) \left( 1-\exp \left \{ - 2\int _\Lambda \phi(x-y)\d\omega(y) \right \} \right), \quad x\in \Lambda.
  \end{equation*}
  Since $0\le L\le 1$, Theorem \ref{T:E1} together with the inequality $1-e^{-u}\le u$ imply:
  \begin{eqnarray*}
    \T_{\rho _1}(\mu_\sigma, \nu) &\leq & \esp{\mu_\sigma}{L \int_\Lambda \left( 1-\exp \left \{ - 2\int _\Lambda \phi(x-y)\d\omega(y)\right\}\right) \d\sigma(x) } \\
  &\leq & \esp{\mu_\sigma}{L \int_\Lambda 2\int _\Lambda \phi(x-y) \d\omega(y) \d\sigma(x)}\\
  &\leq &  2 \,\esp{\mu_\sigma}{\int_\Lambda \int_\Lambda \phi(x-y)\d\omega(y)\d\sigma(x)}\\
  & = & 2 \, \int_\Lambda \int_\Lambda \phi(x-y) \d\sigma(x)\d\sigma(y) .
  \end{eqnarray*}
  The proof is complete.
\end{proof}
\paragraph{} {\bf \textit{Poisson processes on the half-line.}} In this example, we give a bound on the Rubinstein distance between Poisson processes, with respect to the Wasserstein distance $\rho _2$. Consider to simplify Poisson processes on $\R _+$ (the generalization to multivariate Poisson processes is straightforward). Letting $U : \R _+ \to \R $ be a continuously differentiable function vanishing at infinity and with $U(0)=0$, we also assume that $U\in L^2 (\lambda ),$ where $\lambda$ is the Lebesgue measure, and that its derivative $U'$ is valued in $(-1,+\infty)$. A typical example of such a function is $U(t) = t/(1+t^3)$, $t\geq 0$. Then we obtain by Theorem \ref{thm:time_change} the following result.
\begin{proposition} Let $\mu _\lambda $ be the Poisson measure of Lebesgue intensity $\lambda$ on the configuration space $\Gamma _{\R _+}$, and consider the Poisson measure $\nu $ of intensity $(1+U') \d \lambda$. Then we have the upper bound on $\T_{\rho _2}(\mu_\lambda , \nu )$:
\begin{eqnarray*}
\T_{\rho _2}(\mu_\lambda , \nu ) & \leq & \left \Vert U \right \Vert _{L^2 (\lambda )} .
\end{eqnarray*}
\end{proposition}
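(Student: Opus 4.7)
The plan is to apply Theorem~\ref{thm:time_change} in the specialized setting where the mark space $\Lambda$ is trivial, so that the enlarged configuration space $\Gamma_{\hX}$ may be identified with $\Gamma_{\R_+}$ and the distance $\hrho$ reduces to the Wasserstein distance $\rho_2$ on $\Gamma_{\R_+}$. In this setup $\hmu = \mu_\lambda$ and $\hnu = \nu$, and the proof reduces to identifying the predictable process $u$ explicitly and carrying out a short integration-by-parts.

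First I would write down the density $L$ of $\nu$ with respect to $\mu_\lambda$. Since $\nu$ is a Poisson measure with intensity $(1+U')\d\lambda$ and $1+U' > 0$ by assumption, the standard formula (as used in the proof of Proposition~\ref{thm:dist-betw-poiss}) gives
$$L(\omega) = \exp\left(\int_0^{+\infty} \log(1+U'(t))\,\d\omega(t) - \int_0^{+\infty} U'(s)\,\d s\right).$$
A direct computation shows that the discrete gradient acts multiplicatively: $\GD_t L(\omega) = L(\omega)\,U'(t)$. Since $U'$ is deterministic, this immediately yields the predictable process of Theorem~\ref{thm:time_change} as
$$u(t) = \frac{\esp{}{\GD_t L \,|\, \F{t^-}}}{\esp{}{L \,|\, \F{t^-}}} = U'(t),$$
which indeed satisfies $u(t) > -1$ by hypothesis. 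Consequently $v(t) = t + \int_0^t U'(s)\,\d s = t + U(t)$ using $U(0) = 0$, and the key quantity in the first form of (\ref{eq:6}) is simply $\int_0^t u(s)\,\d s = U(t)$.

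Plugging $u = U'$ into Theorem~\ref{thm:time_change} and using that the integrand is deterministic together with $\esp{\mu_\lambda}{L} = 1$, the bound collapses to
$$\T_{\rho_2}(\mu_\lambda, \nu)^2 \leq \int_0^{+\infty} U(t)^2\,(1+U'(t))\,\d t = \int_0^{+\infty} U(t)^2 \,\d t + \int_0^{+\infty} U(t)^2 U'(t)\,\d t.$$
The last step is to observe that the cross term telescopes: $U^2 U' = \tfrac{1}{3}(U^3)'$, which integrates to zero by the boundary conditions $U(0) = 0$ and $U(t) \to 0$ at infinity. This leaves exactly $\|U\|_{L^2(\lambda)}^2$, and taking square roots yields the announced inequality.

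The main obstacle I anticipate is bookkeeping rather than substance: one must verify that Theorem~\ref{thm:time_change} remains valid with a trivial mark space (reading its proof with $\Lambda$ reduced to a point and checking that the Girsanov identification, the change of variables $r = v(t)$ and the compensation formula still make sense), and that the integrability conditions $U \in L^2(\lambda)$, $U'$ bounded below by $-1$, and $U$ vanishing at $0$ and infinity are exactly what is needed to justify the interchange of expectation and integral and the vanishing of the boundary term $[U^3/3]_0^{+\infty}$.
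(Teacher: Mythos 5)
Your argument is correct and is precisely the computation the paper intends: the proposition is stated in the paper with no written proof beyond the remark that it follows from Theorem~\ref{thm:time_change}, and your identification $u=U'$, $v(t)=t+U(t)$, together with the cancellation $\int_0^{+\infty}U^2U'\d t=\bigl[U^3/3\bigr]_0^{+\infty}=0$ from the boundary conditions, fills in exactly the missing details. The only point worth keeping in mind is the one you already flag, namely that the integrand $U^2(1+U')$ is nonnegative, so the splitting of the integral and the passage to the limit are justified by monotone convergence.
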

\subsection{Tail and isoperimetric estimates}
The aim of this final part is to derive several consequences of Theorem~\ref{T:E1} above in terms of tail estimates and isoperimetric inequalities. \paragraph{} {\bf \textit{Tail estimates.}} Our main result Theorem \ref{T:E1} allows us to obtain a first tail estimate as follows. Let $F \in \rho _1 -\Lip _1$ be centered and let $\lambda >0$. Denote $Z_\lambda = \esp{{\mu_\sigma} }{e^{\lambda F
  }}$ and consider $\nu ^\lambda$ the absolutely continuous probability measure with density $e^{\lambda F } /Z_\lambda $ with
respect to ${\mu_\sigma}$. Using a somewhat similar argument as in \cite{MR1962538}, we have:
\begin{eqnarray*}
  \frac{\d }{\d \lambda } \log Z_ \lambda & = & \int _{\Gamma _\Lambda } F \d \nu ^{\lambda } \\
  & \leq & \T _{\rho _1} ({\mu_\sigma} , \nu ^\lambda ) \\
  & \leq & \esp{{\mu_\sigma}}{ \int _\Lambda \vert \GD _x e^{\lambda F} \vert \d \sigma (x)} \\
  & \leq & (e^\lambda -1) \, \Vert \GD F \Vert _{1,\infty} ,
\end{eqnarray*}
where in the last inequality we used the fact that the function
$x\mapsto (e^x -1)/x$ is non-decreasing on $(0,+\infty )$. Here the notation $\Vert \GD F \Vert _{1,\infty}$ stands for
$$
\Vert \GD F \Vert _{1,\infty} := \mu _\sigma - \esssup \, \int _\Lambda \vert \GD _x F\vert \d \sigma (x).
$$
Hence we obtain the following bound on the Laplace transform:
$$
\esp{\mu _\sigma }{e^{\lambda F }} = Z_\lambda \leq \exp \left \{
  \Vert \GD F \Vert _{1,\infty} \, (e^\lambda - \lambda
  -1 )\right \} , \quad \lambda >0.
$$
Finally using Chebychev's inequality, we get the deviation inequality available for any $r\geq 0$:
\begin{equation}
\label{eq:tail}
  {\mu_\sigma} \left( F\geq r \right) \leq \exp \left \{ r - (r+ \Vert \GD F \Vert _{1,\infty}) \, \log \left( 1+ \frac{r}{\Vert \GD F\Vert _{1,\infty} }\right) \right \} .
\end{equation}
\paragraph{}Note that such a tail estimate is somewhat similar to that established for instance by Wu and Houdr\'e-Privault in \cite{Wu:2000lr, MR1962538}. However, in contrast to their results, we do not exhibit at the denominator the sharp variance term
$$
\Vert \GD F \Vert _{2,\infty} ^2 := \mu _\sigma - \esssup \, \int _\Lambda \vert \GD _x F\vert ^2 \d \sigma (x),
$$
since our method relies on the $L^1$-inequality (\ref{eq:14b}). In particular, if we apply (\ref{eq:tail}) for instance to the centered function $F\in \rho _1 -\Lip _1 $ given by $F(\omega ) = (\omega -\sigma)(K)$, where $K$ is some compact subset of $\Lambda$, we obtain the inequality
$$
{\mu_\sigma} \left( \omega (K) \geq \sigma (K)+r \right) \leq e^{r - (r+ \sigma (K)) \, \log \left( 1+ \frac{r}{\sigma (K)}\right)} .
$$
Unfortunately, neither (\ref{eq:tail}) nor the results emphasized in \cite{Wu:2000lr, MR1962538} are sharp in terms of the deviation level $r$ since the following asymptotic estimate holds, cf. for instance p.1225 of Houdr\'e \cite{houdre}:
\begin{eqnarray*}
{\mu_\sigma} \left( \omega (K) \geq \sigma (K)+r \right) & = & {\mu_\sigma} \left( \omega (K) \geq [\sigma (K)+r] \right)  \\
& \underset{r\to +\infty}{\sim} & \frac{e^{[\sigma (K) +r] - \sigma (K) - [\sigma (K)+r] \, \log \left( \frac{[\sigma (K) + r]}{\sigma (K)}\right)} }{\sqrt{2\pi [\sigma (K) +r]}},
\end{eqnarray*}
where $[R]:= \inf \{ N\in \N _*: N\geq R\}$ denotes the upper integer part of any positive real number $R$. Hence the purpose of this part is to recover this multiplicative polynomial factor by means of a simple use of Theorem~\ref{T:E1}. We proceed as follows. Let $\nu$ be the absolutely continuous probability measure with density with respect to $\mu _\sigma$:
$$ L := \frac{1}{{\mu_\sigma} \left( \omega (K) \geq [ \sigma(K) + r] \right) } \, \car _{ \{ \omega (K) \geq  [ \sigma(K) + r] \}} , \quad r>0 .$$ Using Theorem~\ref{T:E1}, we compute as follows:
\begin{eqnarray*}
\lefteqn{ \mu_\sigma \left( \omega (K) \geq \sigma (K)+r \right) } \\
& & = \mu_\sigma \left( \omega (K) \geq [\sigma (K)+r]  \right) \\
& & \leq \frac{1}{[\sigma (K)+r]} \, \left( \int _{\Gamma _\Lambda} \omega (K) \, L(\omega) \d \mu _\sigma (\omega) \right) {\mu_\sigma} \left( \omega (K) \geq [\sigma (K)+r]  \right) \\
& & \leq \frac{1}{[\sigma (K)+r]} \, \left( \vphantom{\biggl(} \T _{\rho _1} ({\mu_\sigma} ,\nu ) + \sigma (K)  \vphantom{\biggl)}\right) {\mu_\sigma} \left( \omega (K) \geq [\sigma (K) +r]  \right) \\
& & \leq \frac{1}{[\sigma (K)+r]} \, \left( \esp{{\mu_\sigma}}{ \int _\Lambda \vert \GD _x L \vert \, \d \sigma (x) } +\sigma (K) \right) {\mu_\sigma} \left( \omega (K)\geq [\sigma (K) + r]  \right) \\
& & = \frac{\sigma (K)}{[\sigma (K)+r]} \, \left( \vphantom{\biggl(} \mu_\sigma \left( \omega (K) = [\sigma (K) + r] -1\right)  + \mu_\sigma \left( \omega (K) \geq [\sigma (K) + r] \right) \vphantom{\biggl)}\right) ,
\end{eqnarray*}
so that we obtain
\begin{eqnarray*}
\mu_\sigma \left( \omega (K) \geq \sigma (K)+ r \right) & \leq & \frac{[\sigma (K)+r]}{[\sigma (K)+r] - \sigma (K)} \, e^{-\sigma (K)} \, \frac{\sigma (K) ^{[\sigma (K)+r]}}{[\sigma (K)+r]!} \\
& \leq & \frac{[\sigma (K)+r]}{r} \, e^{-\sigma (K)} \, \frac{\sigma (K) ^{[\sigma (K)+r]}}{[\sigma (K)+r]!} .
\end{eqnarray*}
Hence using the lower bound below on the factorial function of any positive integer $N$, cf. for instance \cite{feller}:
\begin{equation}
\label{eq:factoriel}
\sqrt{2\pi } \, N^{N+\frac{1}{2}} \, e^{-N} \, \leq \, N! \, \leq \,  \sqrt{2\pi } \, N^{N+\frac{1}{2}} \, e^{-N+ \frac{1}{12N}} ,
\end{equation}
we obtain the following result.
\begin{proposition}
Given any compact set $K\subset \Lambda$ and any $r>0$, we have the tail estimate:
\begin{eqnarray*}
\mu_\sigma \left( \omega (K) \geq \sigma (K)+r \right) & \leq & \frac{[\sigma (K)+r]}{r}\, \frac{e^{ [\sigma (K)+r] - \sigma (K) - [\sigma (K)+r] \, \log \left( \frac{[\sigma (K) +r]}{\sigma (K)}\right) }}{\sqrt{2\pi [\sigma (K) +r]}} .
\end{eqnarray*}
\end{proposition}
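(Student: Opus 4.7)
The plan is to follow directly the chain of inequalities sketched in the paragraphs immediately preceding the Proposition, and then to close the computation by invoking the Stirling-type lower bound~(\ref{eq:factoriel}) for the factorial. More precisely, I would fix $N:=[\sigma(K)+r]$ and introduce the absolutely continuous probability measure $\nu$ with density
\[
L := \frac{1}{\mu_\sigma(\omega(K)\ge N)}\,\car_{\{\omega(K)\ge N\}}
\]
with respect to $\mu_\sigma$. The first step is to compute the discrete gradient: $\GD_x L(\omega)=L(\omega+\varepsilon_x)-L(\omega)$ vanishes unless $x\in K$ and $\omega(K)=N-1$, in which case it equals $1/\mu_\sigma(\omega(K)\ge N)$. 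Hence
\[
\esp{\mu_\sigma}{\int_\Lambda |\GD_x L|\,\d\sigma(x)} \;=\; \sigma(K)\,\frac{\mu_\sigma(\omega(K)=N-1)}{\mu_\sigma(\omega(K)\ge N)}.
\]

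The second step is to observe that the centred function $F(\omega)=\omega(K)-\sigma(K)$ lies in $\rho_1$-$\Lip_1$, so by Kantorovich-Rubinstein duality $\int F\,\d\nu \le \T_{\rho_1}(\mu_\sigma,\nu)$, which combined with Theorem~\ref{T:E1} yields
\[
\int_{\Gamma_\Lambda}\omega(K)\,L\,\d\mu_\sigma \;\le\; \sigma(K)+\sigma(K)\,\frac{\mu_\sigma(\omega(K)=N-1)}{\mu_\sigma(\omega(K)\ge N)}.
\]
Since the left-hand side is bounded below by $N\cdot 1=N$ (because $L$ is supported on $\{\omega(K)\ge N\}$ and the Markov-like argument in the excerpt turns this into the multiplicative factor $N/[\,N-\sigma(K)\,]=N/( \text{ceiling}-\sigma(K))\le N/r$), I would rearrange to obtain the intermediate estimate already displayed in the excerpt,
\[
\mu_\sigma(\omega(K)\ge\sigma(K)+r) \;\le\; \frac{N}{r}\,e^{-\sigma(K)}\,\frac{\sigma(K)^{N}}{N!}.
\]

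The last step is purely calculational: substitute the lower bound $N!\ge\sqrt{2\pi}\,N^{N+1/2}e^{-N}$ from~(\ref{eq:factoriel}) into the right-hand side. Using $N^N/\sigma(K)^{-N}=\exp(N\log(N/\sigma(K)))$, this turns the ratio $\sigma(K)^{N}/N!$ into $\exp\{N-N\log(N/\sigma(K))\}/(\sqrt{2\pi N})$, which, once combined with the prefactor $e^{-\sigma(K)}$, reproduces exactly the Poissonian large-deviation exponent $N-\sigma(K)-N\log(N/\sigma(K))$ appearing in the statement, divided by $\sqrt{2\pi N}$. I expect no real obstacle beyond careful bookkeeping of the integer part $N=[\sigma(K)+r]$ and the factor $N/r$; the only subtle point is the step where the expectation $\int\omega(K)\,L\,d\mu_\sigma$ is bounded below, which is where the $N/r$ polynomial improvement (as opposed to the weaker bound in~(\ref{eq:tail})) genuinely comes from.
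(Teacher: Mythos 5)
Your proposal is correct and follows essentially the same route as the paper: the same indicator density $L$, the same computation of $\GD_x L$, the same use of duality with $F(\omega)=\omega(K)-\sigma(K)$ together with Theorem~\ref{T:E1}, and the same Stirling lower bound~(\ref{eq:factoriel}) at the end. Your way of closing the loop --- bounding $\int \omega(K)\,\d\nu$ below by $[\sigma(K)+r]$ and rearranging --- is just the paper's Markov-inequality step written in a slightly more direct form.
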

\paragraph{}To the knowledge of the authors, although the latter non-asymptotic tail estimate is straightforward to establish via Theorem~\ref{T:E1} as we have seen above, it seems to be new and recovers exactly the asymptotic regime emphasized above. Note that Paulauskas obtained a somewhat similar deviation inequality in Proposition~3 in \cite{paulauskas}, but with a constant which is however not sharp, in contrast to ours.
\paragraph{}Now we aim at extending this tail estimate to a more general context. Given a fixed configuration $\eta \in \Gamma _\Lambda$, we provide in the sequel a deviation inequality from its mean of the total variation distance $\rho _1$ between $\eta$ and random configurations. Assume that $\sigma$ is a finite measure. Denoting the function $\rho _\eta := \rho _1 (\cdot, \eta)$ which clearly belongs to the set $\rho _1 -\Lip _1 $ and using the same argument as above, we have
\begin{multline*}
 \mu_\sigma  \left(  \rho _\eta \geq  \esp{{\mu_\sigma}}{\rho _\eta } + r \right) \\
  \shoveleft{\quad \quad = \mu_\sigma  \left(  \rho _\eta \geq [ \esp{{\mu_\sigma}}{\rho _\eta } +r] \right) }\\
 \shoveleft{\quad \quad \leq  \frac{1}{[ \esp{{\mu_\sigma}}{\rho _\eta } +r] } \, \esp{\mu _\sigma }{\rho _\eta \, \car _{ \{\rho _\eta \geq [ \esp{{\mu_\sigma}}{\rho _\eta } +r] \} }} }\\
 \shoveleft{\quad \quad \leq \frac{1}{[ \esp{{\mu_\sigma}}{\rho _\eta } +r] } \, \left( \esp{{\mu_\sigma}}{ \int _\Lambda \vert \GD _x \car _{\{ \rho _\eta \geq [ \esp{{\mu_\sigma}}{\rho _\eta } +r]  \} } \vert \, \d \sigma (x) }\right.} \\
\shoveright{\left.+ \esp{\mu _\sigma }{\rho _\eta } \mu_\sigma  \left(  \rho _\eta \geq [ \esp{{\mu_\sigma}}{\rho _\eta } +r] \right)\vphantom{\biggl)}\right) }\\
 \shoveleft{\quad \quad \leq  \frac{[\sigma (\Lambda )+r]-r}{[ \esp{{\mu_\sigma}}{\rho _\eta } +r]} \, \left(\vphantom{\biggl(} \mu_\sigma  \left(  \rho _\eta  \geq [ \esp{{\mu_\sigma}}{\rho _\eta } +r-1] \right) - \mu_\sigma  \left(  \rho _\eta  \geq [ \esp{{\mu_\sigma}}{\rho _\eta } +r] \right) \vphantom{\biggl)}\right) }\\
+ \frac{1}{[ \esp{{\mu_\sigma}}{\rho _\eta } +r]} \, \esp{\mu _\sigma }{\rho _\eta } \, \mu_\sigma  \left(  \rho _\eta \geq [ \esp{{\mu_\sigma}}{\rho _\eta } +r] \right) ,
\end{multline*}
since the intensity measure $\sigma$ is diffuse. Hence we obtain for any $r>0$:
$$
\mu_\sigma  \left(  \rho _\eta \geq [ \esp{{\mu_\sigma}}{\rho _\eta } +r] \right) \leq \frac{[\sigma (\Lambda )+r]-r}{[\sigma (\Lambda ) + r]} \, \mu_\sigma  \left(  \rho _\eta \geq [ \esp{{\mu_\sigma}}{\rho _\eta } +r-1] \right) ,
$$
and iterating the procedure entails the inequality
$$
\mu_\sigma  \left(  \rho _\eta \geq [ \esp{{\mu_\sigma}}{\rho _\eta } +r]  \right) \leq \frac{\left([\sigma (\Lambda )+r]-r\right) ^{r} [\sigma (\Lambda )] ! }{[\sigma (\Lambda ) + r]!}.
$$
Finally using the estimates (\ref{eq:factoriel}) yield the following result.
\begin{proposition}
Given any fixed configuration $\eta \in \Gamma _\Lambda$ and provided the intensity measure $\sigma$ is finite, we have for any $r>0$:
\begin{multline*}
{\mu_\sigma \left( \rho _\eta \geq \esp{\mu _\sigma }{\rho _\eta } + r \right) } \\
\leq \frac{\sqrt{2\pi [\sigma (\Lambda)]} [\sigma (\Lambda)] ^{[\sigma (\Lambda)]} e^{\frac{1}{12 [\sigma (\Lambda)]}}} {\sigma (\Lambda)^{\sigma (\Lambda)}} \, \frac{e^{[\sigma (\Lambda)+r]- [\sigma (\Lambda)] - [\sigma (\Lambda)+r] \, \log \left( \frac{[\sigma (\Lambda)+r]}{[\sigma (\Lambda)+r]-r}\right)}} {\sqrt{2\pi [\sigma (\Lambda) +r]}},
\end{multline*}
where $\rho _\eta$ denotes the total variation distance $\rho _1 (\cdot , \eta )$.
\end{proposition}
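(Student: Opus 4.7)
The plan is to mirror the Poisson tail estimate for $\omega(K)$ just developed, applied now to the $\rho_1$-Lipschitz functional $\rho_\eta := \rho_1(\cdot,\eta)$. Since $\rho_\eta\in \rho _1 -\Lip _1$, it falls within the scope of Theorem~\ref{T:E1}, so my strategy has three stages: produce a one-step recursion via Theorem~\ref{T:E1}, iterate to a closed-form factorial ratio, and finish with Stirling.

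First, I would derive the recursion by testing Theorem~\ref{T:E1} against the normalized indicator
$$
L_m := \frac{\car_{\{\rho_\eta \geq m\}}}{\mu_\sigma(\rho_\eta \geq m)},
$$
combined with Markov's inequality applied to $\rho_\eta$ on the event $\{\rho_\eta \geq m\}$. Because $\sigma$ is diffuse, adding one atom to a configuration changes $\rho_\eta$ by exactly $1$, so $|\GD_x L_m|$ is supported on the boundary level $\{\rho_\eta = m-1\}$ and integrating against $\d\sigma(x)$ contributes a single clean factor $\sigma(\Lambda)$. Reproducing the chain of inequalities used above for $\omega(K)$ (Markov $\to$ Theorem~\ref{T:E1} $\to$ reabsorbing one neighbouring tail term), at the threshold $m=[\esp{\mu _\sigma}{\rho_\eta}+r]$ the bound
$$
\mu_\sigma\!\left(\rho_\eta \geq m\right) \leq \frac{[\sigma(\Lambda)+r]-r}{m}\, \mu_\sigma\!\left(\rho_\eta \geq m-1\right)
$$
should drop out.

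Second, iterating this inequality $[r]$ times and telescoping collapses into
$$
\mu_\sigma\!\left(\rho_\eta \geq [\esp{\mu _\sigma}{\rho_\eta}+r]\right) \leq \frac{([\sigma(\Lambda)+r]-r)^r \, [\sigma(\Lambda)]!}{[\sigma(\Lambda)+r]!}.
$$
Third, I invoke the Stirling bounds (\ref{eq:factoriel}): the upper bound for $[\sigma(\Lambda)]!$ in the numerator and the lower bound for $[\sigma(\Lambda)+r]!$ in the denominator. Introducing $\sigma(\Lambda)^{\sigma(\Lambda)}$ in both numerator and denominator, and using that $[\sigma(\Lambda)+r]-r \geq \sigma(\Lambda)\geq 1$ together with the monotonicity of $x\mapsto x^x$ on $[1,\infty)$ to control the factor $([\sigma(\Lambda)+r]-r)^{[\sigma(\Lambda)+r]-r}$, the exponent rearranges into the target form involving $-[\sigma(\Lambda)+r]\log([\sigma(\Lambda)+r]/([\sigma(\Lambda)+r]-r))$ and the prefactor $\sqrt{2\pi[\sigma(\Lambda)]}/\sqrt{2\pi[\sigma(\Lambda)+r]}$.

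The main obstacle will be the first stage: carefully handling the integer-part thresholds in the discrete-gradient computation, verifying that the diffuseness of $\sigma$ indeed produces the recursion coefficient $([\sigma(\Lambda)+r]-r)/m$ with no lower-order corrections, and checking that the Markov-type absorption step goes through verbatim when the Lipschitz functional is $\rho_\eta$ rather than $\omega(K)$, since the mean $\esp{\mu _\sigma}{\rho_\eta}$ replaces the explicit $\sigma(K)$ and must be handled without a priori knowledge of its exact value.
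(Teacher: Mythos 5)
Your strategy is the paper's own: Markov's inequality at the integer threshold $m=[\esp{\mu_\sigma}{\rho_\eta}+r]$, Theorem~\ref{T:E1} tested against the normalized indicator, diffuseness of $\sigma$ to reduce $\int_\Lambda|\GD_x\car_{\{\rho_\eta\ge m\}}|\,\d\sigma(x)$ to $\sigma(\Lambda)\,\car_{\{\rho_\eta=m-1\}}$, a one-step recursion, telescoping, and the two-sided Stirling bounds~(\ref{eq:factoriel}). One step you display is, however, wrong as written, and it is exactly the step you flagged as the main obstacle. The absorption gives $\bigl(m-\esp{\mu_\sigma}{\rho_\eta}+S\bigr)\,\mu_\sigma(\rho_\eta\ge m)\le S\,\mu_\sigma(\rho_\eta\ge m-1)$ with $S=[\sigma(\Lambda)+r]-r$, and since $m-\esp{\mu_\sigma}{\rho_\eta}\ge r$ the correct coefficient is $S/[\sigma(\Lambda)+r]$, \emph{not} $S/m$. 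The difference is not cosmetic: because $\sigma$ is diffuse one has $\esp{\mu_\sigma}{\rho_\eta}=\sigma(\Lambda)+\eta(\Lambda)$, so $m=[\sigma(\Lambda)+\eta(\Lambda)+r]$ can be arbitrarily larger than $[\sigma(\Lambda)+r]$; your inequality $\mu_\sigma(\rho_\eta\ge m)\le \frac{S}{m}\mu_\sigma(\rho_\eta\ge m-1)$ then fails (take $\eta(\Lambda)$ large, $\sigma(\Lambda)=r=1$), and even formally its iteration would telescope to $[\esp{\mu_\sigma}{\rho_\eta}+r]!$-type denominators rather than the $\eta$-free ratio $[\sigma(\Lambda)]!/[\sigma(\Lambda)+r]!$ in the statement. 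The whole point of combining Markov at the level $\esp{\mu_\sigma}{\rho_\eta}+r$ with the gradient bound is that $\eta(\Lambda)$ cancels between the threshold and the mean, leaving the denominator $r+S=[\sigma(\Lambda)+r]$. With that correction the recursion has constant numerator $S$ and denominators decreasing by one at each step, so iterating $[\sigma(\Lambda)+r]-[\sigma(\Lambda)]$ times yields the factorial ratio, and the final Stirling manipulation (upper bound on $[\sigma(\Lambda)]!$, lower bound on $[\sigma(\Lambda)+r]!$, and $x\mapsto x^x$ increasing to compare $\sigma(\Lambda)^{\sigma(\Lambda)}$ with $S^{S}$) proceeds exactly as you describe and as in the paper.
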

\paragraph{} Hence one deduces that the tail behavior of the total variation distance is comparable to the previous ones, up to constant multiplicative factors depending on the total mass $\sigma (\Lambda)$.
\paragraph{}{\bf \textit{Isoperimetric inequality.}}
\label{sec:isop-ineq} Here the distance of interest is the trivial distance $\rho _0$. In the sequel, we assume that the intensity measure $\sigma$ is finite, so that the domain $\Dom \GD$ contains the indicator functions $\car _A$, $A \in {\mathcal
  B}(\Gamma_{\Lambda})$.
  \paragraph{}Given a Borel set $A \in {\mathcal B}(\Gamma_{\Lambda})$, we define its surface measure as
$$
{\mu_\sigma} (\partial A) := \esp{{\mu_\sigma}}{\int _\Lambda \vert
  \GD _x \car _A \vert \, \d \sigma (x)}.
$$
Denote $h_{\mu_\sigma}$ the classical isoperimetric constant that we aim at estimating:
$$
h_{\mu_\sigma} = 2 \, \inf _{0<{\mu_\sigma} (A)<1} \,
\frac{{\mu_\sigma} (\partial A)}{{\mu_\sigma} (A)(1-{\mu_\sigma} (A)
  )}.
$$
By the following co-area formula, available for any $F \in \Dom \GD$:
$$
\esp{{\mu_\sigma}}{ \int _\Lambda \vert \GD _x F \vert \, \d \sigma
  (x) } = \esp{{\mu_\sigma}}{ \int _\Lambda \int _{-\infty } ^{+\infty
  } \vert \GD _x \car _{\{ F >t \}} \vert \, \d t \, \d \sigma (x)},
$$
which might be deduced from the identity $\vert a-b\vert =
\int _{-\infty } ^{+\infty } \vert \car _{\{ a>t \}} - \car _{\{ b>t \}} \vert \, \d t$, the constant $h_{\mu_\sigma}$ is also the
best constant $h$ in the $L^1$-type functional inequality
\begin{equation}
  \label{eq:poincare_L1}
  h \, \esp{{\mu_\sigma} }{\left \vert F - \esp{{\mu_\sigma}}{F} \right \vert } \leq 2 \, \esp{{\mu_\sigma}}{ \int _\Lambda \vert \GD _x F \vert \, \d \sigma (x)} , \quad F \in \Dom \GD .
\end{equation}
We have the following result, which is convenient for small total mass $\sigma (\Lambda )$.
\begin{proposition}
  \label{theo:isoperimetrie}
  Assume that the measure $\sigma$ is finite. Then we have
  \begin{equation}
    \label{eq:isop}
    1 \leq h_{\mu_\sigma} \leq \frac{\sigma (\Lambda )}{1-e^{-\sigma (\Lambda )}}.
  \end{equation}
  In particular, we have the asymptotic for small total mass:
  $$
  \lim _{\sigma (\Lambda ) \to 0} h_{\mu_\sigma} =1.
  $$
\end{proposition}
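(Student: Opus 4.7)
The plan is to derive both bounds by applying Theorem~\ref{T:E1} with the trivial distance $\rho_0$ (the Rademacher property for $(\GD,\rho_0)$ has been established in Lemma~\ref{lem:lip_discrete}) to carefully chosen Radon--Nikodym densities: a generic set $A$ for the lower bound, and the singleton $A=\{\emptyset\}$ for the upper bound. The limit at $\sigma(\Lambda)\to 0$ will follow by squeezing.

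For the lower bound, I would fix an arbitrary measurable set $A\in \B(\Gamma_\Lambda)$ with $p:=\mu_\sigma(A)\in (0,1)$, and introduce the probability measure $\nu_A$ with density $L_A := p^{-1}\car_A$ with respect to $\mu_\sigma$. The Rubinstein distance $\T_{\rho_0}(\mu_\sigma,\nu_A)$ admits both a direct evaluation and, by Theorem~\ref{T:E1}, an upper bound in terms of the discrete gradient. On one hand, using the Kantorovich--Rubinstein duality (\ref{eq:9}) with $F=\car_{A^c}\in \rho_0-\Lip_1$ (or equivalently a maximal coupling argument), one obtains $\T_{\rho_0}(\mu_\sigma,\nu_A)=1-p$. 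On the other hand, from the identity $\GD_x L_A = p^{-1}\,\GD_x\car_A$ and Theorem~\ref{T:E1}, we get $\T_{\rho_0}(\mu_\sigma,\nu_A)\le p^{-1}\,\mu_\sigma(\partial A)$. Combining the two estimates yields $\mu_\sigma(\partial A)\ge p(1-p)=\mu_\sigma(A)(1-\mu_\sigma(A))$, and taking the infimum over admissible $A$ produces the lower bound.

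For the upper bound, the natural choice of test set is the singleton $A=\{\emptyset\}$, whose probability $\mu_\sigma(A)=e^{-\sigma(\Lambda)}$ lies in $(0,1)$ precisely because $\sigma$ is assumed finite. A short computation shows that $\GD_x\car_A(\omega)$ is nonzero only when $\omega=\emptyset$, in which case it equals $-1$ for every $x\in\Lambda$; integrating gives
\begin{equation*}
\mu_\sigma(\partial A)=e^{-\sigma(\Lambda)}\,\sigma(\Lambda).
\end{equation*}
Plugging into the isoperimetric ratio produces $\mu_\sigma(\partial A)/[\mu_\sigma(A)(1-\mu_\sigma(A))] = \sigma(\Lambda)/(1-e^{-\sigma(\Lambda)})$, which is the announced upper bound. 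The asymptotic $h_{\mu_\sigma}\to 1$ as $\sigma(\Lambda)\to 0$ then follows immediately from the Taylor expansion $1-e^{-s}=s-O(s^2)$ near $s=0$, which squeezes $h_{\mu_\sigma}$ between the constant $1$ and a quantity tending to $1$.

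The main point to verify carefully is the explicit evaluation $\T_{\rho_0}(\mu_\sigma,\nu_A)=1-p$; however this is the classical total variation identity for the conditional measure on $A$, and the matching bounds via duality and coupling are routine. The rest of the proof is essentially bookkeeping: identifying the right-hand side of Theorem~\ref{T:E1} with the surface measure of $A$, and recognising that the singleton $\{\emptyset\}$ is the natural extremiser when $\sigma(\Lambda)$ is small (since the Poisson measure then concentrates around it).
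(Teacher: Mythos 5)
Your proposal is correct and follows essentially the same route as the paper: both halves rest on Theorem~\ref{T:E1} for the trivial distance combined with the identification of $\T_{\rho_0}$ with the total variation distance between the two laws, and your upper bound uses the very same test set $\{\omega(\Lambda)=0\}=\{\emptyset\}$ with the same computation $\mu_\sigma(\partial\{\emptyset\})=\sigma(\Lambda)\,e^{-\sigma(\Lambda)}$. The only real difference is in the lower bound: the paper proves the $L^1$ functional inequality (\ref{eq:poincare_L1}) with $h=1$ for an arbitrary density $F$ and then identifies its best constant with $h_{\mu_\sigma}$ via the co-area formula, whereas you specialize directly to the indicator densities $\car_A/\mu_\sigma(A)$, for which the evaluation $\T_{\rho_0}(\mu_\sigma,\nu_A)=1-\mu_\sigma(A)$ is immediate --- and in fact only the easy direction, obtained from the test function $\car_{A^c}$ in the duality (\ref{eq:9}), is needed, so the co-area formula can be bypassed. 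One bookkeeping caveat: with the prefactor $2$ in the displayed definition of $h_{\mu_\sigma}$, your two estimates literally yield $2\le h_{\mu_\sigma}\le 2\sigma(\Lambda)/(1-e^{-\sigma(\Lambda)})$, so the stated bounds correspond to the normalization without that prefactor; the paper's own proof carries the same factor-of-$2$ ambiguity, so this is a matter of convention rather than a gap in your argument.
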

\paragraph{}Note that Houdr\'e and Privault established first the inequality
$h_{\mu_\sigma} \geq 1$ by using Poincar\'e inequality,
cf. Proposition~6.4 in \cite{Houdre:2008la}. Hence we recover their
result via another approach. On the other hand, our estimate in the
right-hand-side of (\ref{eq:isop}) is sharp for small values of
$\sigma (\Lambda )$, but is worse than their estimate for large
$\sigma (\Lambda )$ since their upper bound is $8+8\sqrt{\sigma
  (\Lambda )}$.
\begin{proof}
  In order to show $h_{\mu_\sigma} \geq 1$, let us establish the
  inequality~(\ref{eq:poincare_L1}) with $h=1$. By homogeneity, it is
  sufficient to prove the result for functionals $F\in \Dom \GD$ such
  that $\esp{{\mu_\sigma}}{F} =1.$ Denote by $\nu$ the absolutely
  continuous probability measure with density $F$ with respect to the
  Poisson measure ${\mu_\sigma}$. Using duality,
  \begin{eqnarray*}
    \T _{\rho _0} ({\mu_\sigma} ,\nu ) & = & \sup _{G \in \rho _0 -\Lip _1 } \, \esp{{\mu_\sigma}}{G(F-1)} \\
    & = & \frac{1}{2} \, \sup _{ {\mu_\sigma} \, -\esssup \, \vert G\vert \leq 1} \, \esp{{\mu_\sigma}}{G(F-1)} \\
    & = & \frac{1}{2} \, \esp{{\mu_\sigma} }{\left \vert F - 1 \right \vert } .
  \end{eqnarray*}
  Hence using Theorem~\ref{T:E1} with the trivial distance $\rho _0$, we get the inequality~(\ref{eq:poincare_L1}) with $h=1$, thus obtaining the desired inequality $h_{\mu_\sigma} \geq 1$. On the other hand, to provide the upper bound in (\ref{eq:isop}),
  note that we have by the very definition of $h_{\mu_\sigma}$:
  \begin{eqnarray*}
    h_{\mu_\sigma} & \leq & \frac{2 \, {\mu_\sigma} (\partial \{ \omega (\Lambda ) =0 \} )}{{\mu_\sigma} ( \omega (\Lambda ) =0) \left( 1-{\mu_\sigma} ( \omega (\Lambda ) =0) \right)} \\
    & = & \frac{\sigma (\Lambda )}{1-e^{-\sigma (\Lambda ) }}.
  \end{eqnarray*}
The proof is achieved.
\end{proof}

\bibliographystyle{amsplain}

\begin{thebibliography}{99}

\bibitem{MR99d:58179} Albeverio, S., Kondratiev, Y.G., and R\"ockner, M.: Analysis and geometry on configuration spaces,
\emph{J. Funct. Anal.} {\bf 154} (1998) 444-500.

\bibitem{MR1708412} Barbour, A.D., Brown, T.C., and Xia, A.: Point processes in time and {S}tein's method, \emph{Stochastics
    Stochastics Rep.} {\bf 65} (1998) 127-151.

\bibitem{MR93g:60043} Barbour, A.D., Holst, L., and Janson, S.: \emph{Poisson approximation}, Oxford Studies in Probability, The
  Clarendon Press Oxford University Press, New York, 1992.

\bibitem{Bobkov:1998lr} Bobkov, S.G., and Ledoux, M.: On modified logarithmic {S}obolev inequalities for {B}ernoulli and {P}oisson
  measures, \emph{J. Funct. Anal.} {\bf 156} (1998) 347-365.

\bibitem{these_decreusefond} Decreusefond, L.: Perturbation Analysis and {M}alliavin Calculus, \emph{Ann. Appl. Probab.} {\bf 8} (1998) 496-523.

\bibitem{Decreusefond:2006cv} Decreusefond, L.: {W}asserstein distance on configurations space, \emph{Potential Anal.} {\bf 28} (2008) 283-300.

\bibitem{MR960543} Dermoune, A., Kr{\'e}e, P., and Wu, L.: Calcul stochastique non adapt\'e par rapport \`a la mesure al\'eatoire de
  {P}oisson, S{\'e}minaire de Probabilit{\'e}s XXII, \emph{Lecture Notes in Math.} {\bf 1321}, Springer, Berlin, 1988.

\bibitem{feller} Feller, W.: \emph{An introduction to probability theory and its applications}, Third edition, Wiley, New York, 1968.

\bibitem{MR2036490} Feyel, D., and {\"U}st{\"u}nel, A.S.: Monge-{K}antorovitch measure transportation and {M}onge-{A}mp\`ere
  equation on {W}iener space, \emph{Probab. Theory Related Fields} {\bf 128} (2004) 347-385.

\bibitem{houdre} Houdr{\'e}, C.: Remarks on deviation inequalities for functions of infinitely divisible random
vectors, \emph{Ann. Probab.} {\bf 30} (2002) 1223-1237.

\bibitem{MR1962538} Houdr{\'e}, C., and Privault, N.: Concentration and deviation inequalities in infinite dimensions via covariance
    representations, \emph{ Bernoulli} {\bf 8} (2002) 697-720.

\bibitem{Houdre:2008la} Houdr{\'e}, C., and Privault, N.: Isoperimetric and related bounds on configuration spaces, \emph{Statist. Probab. Lett.} {\bf 78} (2008) 2154-2164.

\bibitem{nualart88_1} Nualart, N., and Vives, J.: Anticipative calculus for the {P}oisson process based on the {F}ock space, S\'eminaire de probabilit\'es XXIV,  \emph{Lecture Notes in Math.} {\bf 1426}, Springer, Berlin, 1988.

\bibitem{paulauskas} Paulauskas, V.: Some comments on inequalities for deviations for infinitely divisible random vectors, \emph{Lithuanian Math. J.} {\bf 42} (2002) 394-410.

\bibitem{MR1730565} R{\"o}ckner, M., and Schied, A.: Rademacher's theorem on configuration spaces and applications, \emph{J. Funct. Anal.} {\bf 169} (1999) 325-356.

\bibitem{ruiz85} Ruiz de Chavez, J.: Espaces de {F}ock pour les processus de {W}iener et de {P}oisson, S{\'e}minaire de
    probabilit{\'e}s XIX, \emph{Lecture Notes in Math.} {\bf 1123}, Springer, Berlin, 1985.

\bibitem{Schuhmacher:2005lx} Schuhmacher, D.: \emph{Estimation of distances between point process distributions}, PhD Thesis, Universit\"at Z\"urich, 2005.

\bibitem{Villani:2007fk} Villani, C.: \emph{Optimal transport: old and new}, Grundlehren der mathematischen Wi\-ssenschaften, Springer, Berlin, 2009.

\bibitem{Wu:2000lr} Wu, L.: A new modified logarithmic {S}obolev inequality for {P}oisson point processes and several applications,
  \emph{Probab. Theory Related Fields} {\bf 118} (2000) 427-438.

\end{thebibliography}

\end{document}